\documentclass[oneside]{amsart}                     % onecolumn (standard format)
\pdfoutput=1
\usepackage{graphicx}
\usepackage{geometry}
\usepackage{graphicx}
\usepackage{amsfonts}
\usepackage{amssymb}
\usepackage{amsmath}
\usepackage{tikz}
\usepackage{pinlabel}
\usepackage{dbnsymb}
\usetikzlibrary{knots}

\newtheorem{theorem}{Theorem}[section]
\newtheorem{proposition}[theorem]{Proposition}
\newtheorem{lemma}[theorem]{Lemma}
\newtheorem{question}[theorem]{Question}
\newtheorem{corollary}[theorem]{Corollary}

\theoremstyle{remark}
\newtheorem{definition}[theorem]{Definition}
\newtheorem{example}[theorem]{Example}
\newtheorem{remark}{Remark}[section] 

\newtheorem{claim}{Claim}

\newcommand{\lcm}{\text{lcm}}
\newcommand{\eps}{\epsilon}
\newcommand{\Z}{\mathbb{Z}}

\makeatletter
\@namedef{subjclassname@1991}{2020 Mathematics Subject Classification}
\makeatother

\begin{document}

\title{Knot Graphs and Gromov Hyperbolicity  }

\author{Stanislav  Jabuka}
\address{Department of Mathematics and Statistics, University of Nevada, Reno NV, 89557 USA}
\email{jabuka@unr.edu}                      

\author{Beibei Liu}
\address{School of Mathematics, Georgia Institute of Technology, Atlanta, GA, 30332 USA}
\email{bliu96@gatech.edu}                      

\author{Allison H. Moore}
\address{Department of Mathematics \& Applied Mathematics\\ Virginia Commonwealth University, Richmond, VA 23284, USA}   
\email{moorea14@vcu.edu}   

\thanks{S. J. was partially supported by the Simons Foundation, Award ID 524394, and by the NSF, Grant No. DMS--1906413. B. L. is grateful to Max Planck Institute for Mathematics in Bonn for its hospitality and financial support. A. H. M. was partially supported by the NSF, Grant No. DMS--1716987.}

\keywords{knots, unknotting operations, Gromov hyperbolicity}

\subjclass{57K10, 57K18 (primary)}

\maketitle

\begin{abstract}
We define a broad class of graphs that generalize the Gordian graph of knots. These knot graphs take into account unknotting operations, the concordance relation, and equivalence relations generated by knot invariants. 
We prove that overwhelmingly, the knot graphs are not Gromov hyperbolic, with the exception of a particular family of quotient knot graphs. 
We also investigate the property of homogeneity, and prove that the concordance knot graph is homogeneous. Finally, we prove that that for any $n$, there exists a knot $K$ such that the ball of radius $n$ in the Gordian graph centered at $K$ contains no connected sum of torus knots.
\end{abstract}

%%%%%

\section{Introduction}
\label{SectionIntroduction}

The Gordian graph is a countably infinite graph in which each vertex represents the isotopy type of a knot, and two vertices are connected by an edge whenever the corresponding knots are related by a crossing change. 
A variant of this graph can be similarly defined given any unknotting operation, and any such `knot graph' may be regarded as a geodesic metric space with the usual distance metric on a graph. 
Although unknotting number, or more generally, the H(n)-unknotting numbers, are widely studied knot invariants, the general structure of these graphs remains mysterious.  
The main aim of this article is to study their global metric properties. We will prove:
\begin{theorem} \label{MainTheorem1}
\label{short not hyp}
The Gordian graph, the $H(n)$-Gordian graph for $n\geq 2$, and the concordance Gordian graph are not Gromov hyperbolic. 
\end{theorem}
A geodesic metric space is Gromov hyperbolic, or $\delta$-hyperbolic, if every geodesic triangle is $\delta$-thin for some $\delta \ge 0$. A geodesic triangle is $\delta$-thin whenever each edge is contained in the closed $\delta$-neighborhood of the union of the remaining two. Our strategy in the proof of Theorem \ref{short not hyp} is the direct construction of geodesic triangles that are never $\delta$-thin. % for any $\delta$.  
In contrast to Theorem \ref{short not hyp}, we prove:
\begin{theorem} \label{TheoremMain2}
\label{short hyp}
The quotients of the Gordian graph induced by the smooth four-genus, unknotting number, Heegaard Floer $\tau$-invariant and Khovanov homology $s$-invariant, and the quotient of the $H(2)$-Gordian graph induced by the non-orientable smooth four-ball genus are all isometric to a subspace of $\mathbb{Z}$. In particular, they are all Gromov hyperbolic.
\end{theorem}

To state Theorem \ref{short not hyp} and Theorem \ref{short hyp} more precisely (see Theorems \ref{TheoremAboutNonHyperbolicityResults} and \ref{TheoremAboutHyperbolicityResults}), we introduce the general definition of a {\em knot graph} $\mathcal K_{\mathcal O}$ with respect to an unknotting operation $\mathcal{O}$ in Definition \ref{DefinitionOfKnotGraphs} and extend this definition to quotients of knot graphs  induced by knot invariants or under equivalence generated by concordance. In particular, the concordance knot graph $\mathcal{CK}_{\mathcal{O}}$ associated to an unknotting operation $\mathcal{O}$ is the graph whose vertices are concordance classes of oriented knots, in which a pair of vertices span an edge if there exist oriented knots representing those classes related by an $\mathcal{O}$-move. 

To the best of our knowledge, Definition \ref{DefinitionOfKnotGraphs} is sufficiently general to include all instances of knot graphs that have appeared in the literature thus far. The \emph{Gordian graph} $\mathcal K_{\slashoverback}$ (where $\slashoverback$ indicates the crossing change operation) has been studied for instance in \cite{Baader2}, \cite{Baader3},  \cite{Baader1},  \cite{Blair}, \cite{GambaudoGhys}, \cite{HirasawaUchida}. 
The {\em band-Gordian graph} $\mathcal K_{H(2)}$ and its analogues, the {\em $H(n)$-Gordian graphs} for $n\ge 3$, have been considered in \cite{ZhangYang}, \cite{ZhangYang2018}, and the {\em pass-move Gordian complex} $\mathcal K_{\#}$ appears in \cite{NakanishiOhyama}. We are not aware of any previous results about the concordance knot graphs $\mathcal{CK}_\mathcal O$, but \cite{IchiharaJong} studies a quotient graph where an equivalence relation on edges is induced by the Conway polynomial and the unknotting operation is the pass-move. 

We remark that Gambaudo and Ghys \cite{GambaudoGhys} previously established a quasi-isometric embedding of the integer lattice $\mathbb{Z}^d$ into the set of knots with a metric equivalent to the edge-metric on the knot graph $\mathcal{K_{\slashoverback}}$, i.e. the Gordian graph.  They noted the naturality of this metric in the sense that the Gordian distance between two knots is the minimum number of generic double points over immersed homotopies relating them. Their construction explicitly involves torus knots. This raises the question as to whether a genuine quasi-isometry could be constructed via torus knots. We prove this is not the case. 

Let $B_{r}(v)$ denote the radius $r$ ball  centered on the vertex $v$ in the Gordian  graph. 
\begin{theorem} \label{TheoremMain3}
\label{torus ball}
For all $r>0$, there exists a knot $K$ such that $B_{r}(K)$ does not contain any arbitrary connected sum of torus knots.
\end{theorem}

Besides the hyperbolicity of the knot graphs, we  also study the property of homogeneity. A metric space $(X,d)$ is {\em homogeneous} if for every $x,y\in X$ there exists an isometry $\psi:X\to X$ with $\psi(x) = y$, i.e. if the isometry group of $X$ acts transitively on $X$. In Section \ref{homogeneity}, we show

\begin{theorem} \label{TheoremMain4}
\label{homogeneous}
The concordance graph associated with any set of unknotting operations is always homogeneous. The quotients of the Gordian graph $Q\mathcal K^\tau _{\slashoverback}$ and $Q\mathcal K^{s'}_{\slashoverback}$ with respect to the $\tau$ and $s$-invariants are homogeneous. 
\end{theorem}

We  pose several other questions about the structure of the knot graphs, and study the link of the class in the unknot in several quotient knot graphs in Section \ref{homogeneity}.

%%%%%%%%%%%%%%%%%%%%%%%%%%%%%%%%%%%%%%%%%%%%%%%%%%%%%%%%%%%%%
%%%%%%%%%%%%%%%%%%%%%%%%%%%%%%%%%%%%%%%%%%%%%%%%%%%%%%%%%%%%%

\subsection{Organization}
Section \ref{SectionGeodesicMetricSpaces} provides a range of background material, including discussions on metrics on graphs and geodesics in the resulting metric spaces, Gromov hyperbolicity and quasi-isometries, definitions of general knot graphs and quotients of knot graphs, bounds on the distance function in $H(n)$-Gordian graphs, and computations of first homology groups of certain Brieskorn spheres. 
In Section \ref{Hn and Concordance triangles} we construct explicit geodesic triangles in the $H(n)$-Gordian knot graphs and in the  concordance knot graphs, that are not $\delta$-thin for any $\delta\ge 0$. Section \ref{quotient triangles} is devoted to the study of quotient knot graphs, and two general theorem are established that in some cases completely identify their isometry type (Theorems \ref{IsometryBetweenQuotinetKnotGraphAndIntegers} and \ref{IsometryBetweenQuotinetKnotGraphAndIntegersCaseTwo}).  
Lastly, Section \ref{proofs} provides the proofs of Theorems \ref{MainTheorem1}--\ref{TheoremMain4} in Sections \ref{SubsectionProofOfMainTheorem1}--\ref{homogeneity} respectively. Before proving them,  some theorems are restated there in greater generality first. 
%%%%%%%%%%%%%%%%%%%%%%%%%%%%%%%%%%%%%%%
%%%%%%%%%%%%%%%%%%%%%%%%%%%%%%%%%%%%%%%%
%%%%%%%%%%%%%%%%%%%%%%%%%%%%%%%%%%%%%%%%
\section{Background Material} 
\label{SectionGeodesicMetricSpaces}
This section provides a panoply of background material upon which the proofs of Theorems \ref{MainTheorem1}--\ref{TheoremMain4} are based. Sections \ref{geodesic metric spaces}--\ref{quasi and hyp} review material pertaining to graphs as metric spaces and their hyperbolicity properties. Section \ref{knot graphs} defines general knot graphs, of which the examples appearing in Theorems \ref{MainTheorem1}--\ref{TheoremMain4} are special cases. Sections \ref{sec:H(n)moves} and \ref{sec:bounds} remind the reader of the unknotting moves $H(n)$ that generalize noncoherent band moves when $n\ge 3$, and give some bounds on the associated distance function $d_n$ between knots. Lastly, Section \ref{brieskorn} provides background on 3-dimensional Brieskorn spheres, including computations of the first homology group of some examples.   
%%%%%%%%%%%%%%%%
%%%%%%%%%%%%%%%%
%%%%%%%%%%%%%%%%

\subsection{Geodesic Metric Spaces}
\label{geodesic metric spaces}

Let $(X,d)$ be a metric space, and let $\alpha:[a,b]\to X$ be a path. Given a partition $\mathcal P=\{t_0, \dots, t_n\}$ of $[a,b]$, let 

\[ L(\alpha, \mathcal P) = \sum _{i=1}^n d(\alpha (t_{i-1}), \alpha (t_i)) \]

denote the {\em polygonal length of $\alpha$ asssociated to the partition $\mathcal P$}. We say that $\alpha$ is a {\em rectifiable path} if the supremum of its polygonal lengths, taken over all partitions of $[a,b]$, is finite. In that case we define the {\em length $L(\alpha)$ of $\alpha$} as said supremum:
\[ L(\alpha) = \sup_{\mathcal P} L(\alpha, \mathcal P). \]

It is easy to check that if $\alpha :[a,b]\to X$ is a rectifiable path, then so is its restriction to any segment $[c,d]\subset [a,b]$.

A metric space $(X,d)$ is called a {\em geodesic metric space} if for every pair of points $x,y\in X$ there exists a rectifiable path $\alpha:[0,1]\to X$ with $\alpha(0) = x$, $\alpha (1) = y$ and with 
\[ L(\alpha |_{[s,t]}) = d(\alpha (s), \alpha (t)), \qquad \qquad  \forall s, t \in [0,1]. \]

Any such path $\alpha$ is called a {\em geodesic path}. A {\em geodesic triangle} $\{\alpha, \beta, \gamma\}$ in a geodesic metric space $(X,d)$ is a triple of geodesics $\alpha, \beta, \gamma :[0,1]\to X$ with $\alpha (1) = \beta (0)$, $\beta (1) = \gamma(0)$ and $\gamma(1) = \alpha (0)$. We refer to $\alpha$, $\beta$ and $\gamma$ (or sometimes their images in $X$) as the {\em edges} of the geodesic triangle, and the points $\{\alpha(1), \beta(1), \gamma(1)\}$ as the {\em vertices} of the geodesic triangle.

For  $\delta\ge 0$, a geodesic triangle $\{\alpha_1, \alpha_2, \alpha_3\}$ is called {\em $\delta$-thin} if for every $i\in \{1,2,3\}$ and every $x\in \text{Im}(\alpha_i)$, the inequality 
\[ d(x, \cup _{j\ne i} \text{Im}(\alpha_j))\le \delta \]
holds. 
%%%
%%%
\begin{definition} \label{DefinitionOfGromovHyperbolic}
The geodesic metric space $(X,d)$ is called {\em $\delta$-hyperbolic} if every geodesic triangle in $X$ is $\delta$-thin, and we say that $(X,d)$ is {\em Gromov hyperbolic} if it is $\delta$-hyperbolic for at least one $\delta\ge 0$. 	
\end{definition}
%%%
%%%
Observe that if $X$ is $\delta$-hyperbolic then it is also $\delta'$-hyperbolic for every $\delta'\ge \delta$. If $X$ is Gromov hyperbolic, we let 
\[ \delta(X) = \inf \{\delta \ge 0\,|\, X \text{ is $\delta$-hyperbolic}\}. \]

%%%%%%%%%%%%%%%%%%%%%%%%%%%%%%%%%%%%%%%%%%%%%%%%%%%
%%%%%%%%%%%%%%%%%%%%%%%%%%%%%%%%%%%%%%%%%%%%%%%%%%%
%%%%%%%%%%%%%%%%%%%%%%%%%%%%%%%%%%%%%%%%%%%%%%%%%%%

\subsection{Graphs as Geodesic Metric Spaces}
\label{graphs as metric}

Let $G$ be a graph and let $Vert(G)$ and $Edge(G)$ denote its sets of vertices and edges respectively. A graph $G$ can be viewed as a 1-dimensional CW complex whose 0-cells are the vertices of $G$, and whose 1-cells are in one-to-one correspondence with the edges of $G$. Specifically, for each edge $e\in Edge(G)$ with endpoints $v, w\in Vert(G)$ we attach a 1-cell $\alpha_e\cong [0,1]$ to $Vert(G)$ whose attaching map identifies the two endpoints $\{0,1\}$ of the 1-cell $\alpha_e$ with $v$ and $w$. This endows the graph $G$ with the structure of a topological space, in such a way that $G$ is connected as a graph if and only if it is path-connected as a topological space. 

We next define a metric $d$ on a connected graph $G$, by first defining it for vertices $v, w \in Vert(G)$ as:
\[ d(v,w) = \text{ Minimum number of edges needed to connect $v$ to $w$}. \]
Note that this definition tacitly gives each edge in the graph length 1. The distance between a pair of points $x, y$ lying on the same 1-cell $\alpha_e\cong [0,1]$ is 
\[
d(x,y) = \left\{
\begin{array}{cl}
|x-y| & \quad ; \quad \text{ if }  \alpha _e \text{ has two distinct endpoints}, \cr

\min \{|x-y|, |x|+|1-y| \} & \quad ; \quad \text{ if } \alpha _e \text{ has only one endpoint}. 
\end{array}
\right.
\]

In the above definition, we assume in the second case that the attaching map takes the unit interval to a circle of radius $1/2\pi$ so that $d(x, y)$ just corresponds with the distance along a circle of circumference one.  
Lastly, given points $x, y$ lying on distinct edges $\alpha_e$ and $\alpha_{e'}$ with boundary vertices $\{v_0, v_1\}$ and $\{w_0, w_1\}$ respectively, we define their distance $d(x,y)$ as 
\[
d(x,y) = \min_{i,j \in \{0,1\}}  d(x,v_i)+d(v_i, w_j)+d(w_j,y).
\]
With these definitions in place, it is now easy to verify that for a connected graph $G$, the pair $(G,d)$ becomes a geodesic metric space. 
We shall use this structure on graphs implicitly on all knot graphs in subsequent sections. 
%%%%%%%%%%%%%%%%%%%%%%%%%%%%%%%%%%%
%%%%%%%%%%%%%%%%%%%%%%%%%%%%%%%%%%%
\subsection{Quasi-isometries and hyperbolicity}
\label{quasi and hyp}

A map $f: X_1\rightarrow X_2$ between metric spaces $(X_1, d_{1})$, $(X_2, d_{2})$ is called a \emph{quasi-isometric embedding} if there are constants $a\geq 1, b\geq 0$ such that the double inequality
\[ \dfrac{1}{a} d_{1}(x, x')-b\leq d_{2}(f(x), f(x'))\leq ad_{1}(x, x')+b, \]
holds for all $x, x'\in X_1$. In addition, if there is a constant $C\geq 0$ such that for every $y\in Y$ there exists an $x\in X$ with 
\[ d_{2}(y, f(x))\leq C,\] 
then $X_1$ and $X_2$ are called \emph{quasi-isometric}. If $C=0$, the map $f$ is called \emph{bi-Lipschitz}. 

Gromov hyperbolicity is invariant under quasi-isometries between geodesic spaces. 

\begin{proposition}\cite[Theorem 12]{Ghy}
\label{hyperbo}
Assume that  $(X_1, d_{1})$ is quasi-isometric to $(X_2, d_{2})$ with parameters $a, b$ and $C$. If $X_{1}$ is $\delta$-hyperbolic, then $X_{2}$ is $\delta'$-hyperbolic with $\delta'$ depending on $\delta, a, b, C$. 
\end{proposition}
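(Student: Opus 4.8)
The plan is to prove this by the standard geometric-group-theory route: quasi-isometry invariance of hyperbolicity follows from the stability of quasi-geodesics (the Morse lemma). First I would record that the quasi-isometry $f \colon X_1 \to X_2$ admits a \emph{quasi-inverse} $g \colon X_2 \to X_1$, that is, a quasi-isometric embedding whose parameters depend only on $a, b, C$ and for which both $d_2(f(g(y)), y)$ and $d_1(g(f(x)), x)$ are bounded by a constant depending only on $a, b, C$. This device lets me transport geodesic triangles back and forth between the two spaces while controlling the metric distortion at each stage.

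Next, given an arbitrary geodesic triangle $T = \{\alpha_1, \alpha_2, \alpha_3\}$ in $X_2$, I would push it into $X_1$ via $g$. Because $g$ is a quasi-isometric embedding and each $\alpha_i$ is a geodesic, each image $g \circ \alpha_i$ is an $(a', b')$-quasi-geodesic in $X_1$, with $a', b'$ depending only on the original parameters (after a standard taming step one may take these quasi-geodesics to be continuous). Since $X_1$ is a geodesic space, I can then form an honest geodesic triangle $T'$ in $X_1$ on the three vertices $g(\alpha_i(0))$, and because $X_1$ is $\delta$-hyperbolic, $T'$ is $\delta$-thin.

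The heart of the argument, and the step I expect to be the main obstacle, is the Morse lemma: in a $\delta$-hyperbolic geodesic space, every $(a', b')$-quasi-geodesic lies within Hausdorff distance $R = R(\delta, a', b')$ of any geodesic joining its endpoints. I would establish this by the usual exponential-divergence estimate. If a quasi-geodesic strayed a distance $D$ from the geodesic with the same endpoints, then iterating the thin-triangle condition forces the length of the geodesic, and hence, by the quasi-geodesic inequality, the parameter length of the quasi-geodesic, to grow exponentially in $D$; for $D$ large this contradicts the linear upper bound built into the definition of a quasi-geodesic, yielding the uniform bound $R$. Applying this to each edge $g \circ \alpha_i$ against the corresponding edge of $T'$ and combining with the $\delta$-thinness of $T'$ shows that the quasi-geodesic triangle $g(T)$ is $\delta''$-thin with $\delta'' = \delta + 2R$ depending only on $\delta, a, b$.

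Finally I would push forward by $f$. Since $f$ is a quasi-isometric embedding, distances are distorted by at most the affine factor governed by $(a, b)$, so the $\delta''$-thinness of $g(T)$ yields a thinness estimate for $f(g(T))$ in $X_2$; replacing $f(g(T))$ by the original triangle $T$ up to the bounded error $d_2(f(g(y)), y) \le C'$ then gives $\delta'$-thinness of $T$, with $\delta'$ an explicit function of $\delta, a, b, C$. As $T$ was an arbitrary geodesic triangle in $X_2$, this shows $X_2$ is $\delta'$-hyperbolic, which is the assertion.
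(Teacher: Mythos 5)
The paper offers no proof of this proposition at all: it is quoted as Theorem 12 of the cited reference \cite{Ghy}, so there is no internal argument to compare yours against. Your proposal is the standard proof of quasi-isometry invariance of hyperbolicity found in the literature (and essentially the proof in the cited source): construct a quasi-inverse $g$ with parameters controlled by $a,b,C$; observe that $g$ carries each edge of a geodesic triangle $T\subset X_2$ to a quasi-geodesic in $X_1$; invoke the Morse lemma (stability of quasi-geodesics in a $\delta$-hyperbolic geodesic space) to compare each $g\circ\alpha_i$ with a genuine geodesic of the same endpoints; and combine with the $\delta$-thinness of the comparison triangle to get uniform thinness constants back in $X_2$. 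The outline is sound, and you correctly isolate the Morse lemma as the only step requiring substantial work; the exponential-divergence argument you sketch for it is the standard one. One small simplification is available at the end: once you know $g(T)$ is $\delta''$-thin in $X_1$, you need not push forward by $f$ at all. For $x$ on an edge $\alpha_i$ of $T$, thinness of $g(T)$ gives a point $g(y)$, with $y$ on another edge, satisfying $d_1(g(x),g(y))\le \delta''$, and then the lower quasi-isometric-embedding inequality for $g$ yields $d_2(x,y)\le a'(\delta''+b')$ directly; this avoids the extra bounded error $d_2(f(g(\cdot)),\cdot)$ in your final step, though your route is also correct.
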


\begin{corollary}
If $(X_1, d_1)$ and $(X_2, d_2)$ are quasi-isometric geodesic metric spaces and $X_1$ is not $\delta$-hyperbolic for any $\delta \ge 0$, then neither is $X_2$. 
\end{corollary}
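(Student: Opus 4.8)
The plan is to deduce this directly from Proposition \ref{hyperbo} by taking the contrapositive. The statement to prove is that non-hyperbolicity of $X_1$ forces non-hyperbolicity of $X_2$; equivalently, I will show that if $X_2$ \emph{is} Gromov hyperbolic, then so is $X_1$. Concretely, I would assume toward a contradiction that $X_2$ is $\delta$-hyperbolic for some $\delta \ge 0$, and then produce a $\delta'$ for which $X_1$ is $\delta'$-hyperbolic, contradicting the hypothesis that $X_1$ fails to be $\delta$-hyperbolic for every $\delta \ge 0$.

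The single ingredient that makes this work is the symmetry of the quasi-isometry relation. Proposition \ref{hyperbo} is stated with a preferred direction: it transports hyperbolicity from the first space $X_1$ to the second space $X_2$. To conclude what I want, I need to apply it with the roles reversed, transporting hyperbolicity from $X_2$ back to $X_1$. This is legitimate because being quasi-isometric is an equivalence relation on geodesic metric spaces; in particular, if $f : X_1 \to X_2$ is a quasi-isometry with parameters $a, b, C$, then it admits a quasi-inverse $g : X_2 \to X_1$ which is itself a quasi-isometry (with parameters depending only on $a, b, C$), so that $X_2$ is quasi-isometric to $X_1$.

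With symmetry in hand, the argument is immediate. First I would record that $X_2$ is quasi-isometric to $X_1$. Next, under the assumption that $X_2$ is $\delta$-hyperbolic, I would apply Proposition \ref{hyperbo} with $X_2$ and $X_1$ playing the roles of the first and second spaces respectively; this yields a constant $\delta'$, depending on $\delta$ and on the quasi-isometry parameters, for which $X_1$ is $\delta'$-hyperbolic. Since $\delta' \ge 0$, this directly contradicts the hypothesis that $X_1$ is not $\delta$-hyperbolic for any $\delta \ge 0$. Therefore no such $\delta$ exists for $X_2$ either, which is precisely the claim.

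I do not anticipate any genuine obstacle here, as the corollary is essentially a formal restatement of the proposition. The only point requiring a sentence of justification is the symmetry of quasi-isometry, i.e. the existence of a quasi-inverse; this is a standard fact and needs no computation beyond invoking it. Everything else is a straightforward contrapositive applied with the two spaces interchanged.
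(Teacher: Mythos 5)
Your proof is correct and is precisely the argument the paper intends: the corollary is stated without proof as an immediate consequence of Proposition \ref{hyperbo}, obtained by applying that proposition with the two spaces interchanged (justified by the symmetry of the quasi-isometry relation) and taking the contrapositive. Your explicit mention of the quasi-inverse fills in the one small point the paper leaves implicit, so there is nothing to correct.
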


\begin{remark}
An interesting result by Bowditch \cite{Bowditch} (see also Chapter 6 in \cite{Gromov} as well as \cite{PortillaRodriguezTouris})   
posits that hyperbolicity of a geodesic metric space is equivalent to the hyperbolicity of a graph associated to it, underscoring the ``approximately-tree-like" nature of hyperbolic spaces. 
This result puts the onus on understanding and exploring hyperbolicity in graphs, which is partially the motivation for this work. 
\end{remark}
%%%%%%%%%%%%%%%%%%%%%%%%%%%%%%%%%%%
%%%%%%%%%%%%%%%%%%%%%%%%%%%%%%%%%%%

\subsection{Knot Graphs}
\label{knot graphs}
An {\em unknotting operation $\mathcal O$} on knot diagrams is a local modification/move on a knot diagram, with the property that any knot diagram can be unknotted with a finite number of such $\mathcal O$-moves and/or their inverses. Examples of unknotting operations abound and include the crossing change operation and the infinite family of $H(n)$-moves, $n\ge 2$ from Figure \ref{FigureOfH(n)Moves}. 

%%%%%%
%%%%%%
\begin{figure}[h] 
\centering
\includegraphics[width=8cm]{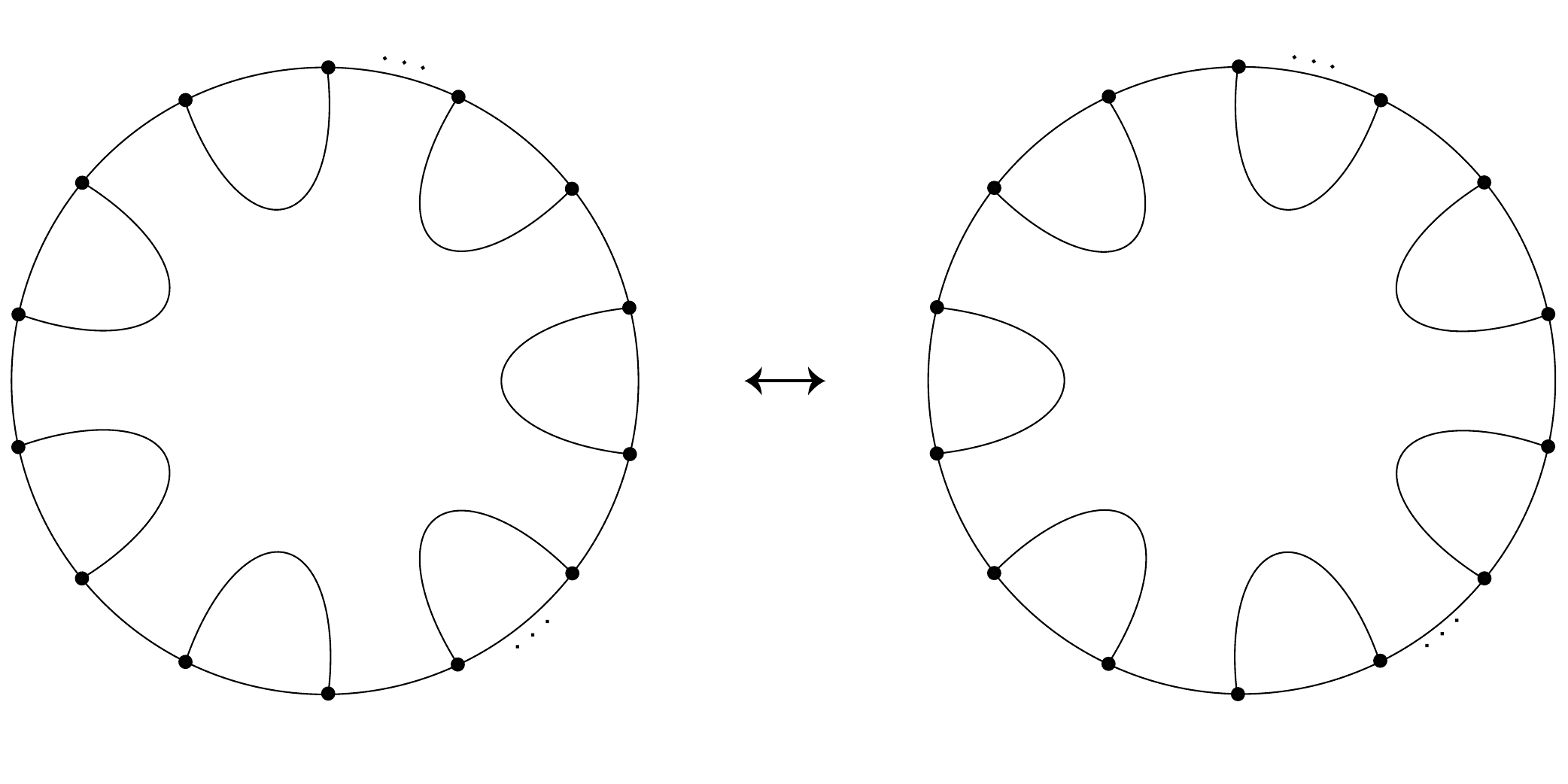}
\put(-185,-10){(a)}
\put(-50,-15){(b)}
\caption{The $H(n)$-move is the unknotting operation which replaces the pattern from subfigure (a) in a knot diagram, with the pattern from subfigure (b). The operation is to be performed so as to preserve the number of components. Shown here is the example of $n=7$. These moves were first introduced and studied by Hoste, Nakanishi and Taniyama \cite{HosteNakanishiTaniyama}.} 
\label{FigureOfH(n)Moves}
\end{figure}
%%%
%%%
\begin{definition} \label{DefinitionOfKnotGraphs}
Let $\mathcal O$ be an unknotting operation on knot diagrams, and for $m\in \mathbb N$ let $\mathcal I_1, \dots, \mathcal  I_m$ be integer-valued knot invariants and let $\mathbb I = \{\mathcal I_1, \dots, \mathcal I_m\}$.  
\begin{itemize}
\item[(i)] The {\em $\mathcal O$-Gordian Knot Graph} $\mathcal K_{\mathcal O}$ associated to the unknotting operation $\mathcal O$ is the graph whose vertices are unoriented knots, and in which a pair of knots $K$ and $K'$ span an edge if they possess diagrams related by an $\mathcal O$-move (or its inverse). 
\vskip2mm
%%%
\item[(ii)] The {\em Concordance Knot Graph} $\mathcal{CK}_{\mathcal O}$ associated to the unknotting operation $\mathcal O$ is the graph whose vertices are concordance classes $[K]$ of oriented knots $K$, and in which a pair of concordance classes $[K]$ and $[K']$ span an edge if there exist oriented knots $L$ and $L'$ concordant to $K$ and $K'$ respectively, and such that $L$ and $L'$ possess diagrams related by an $\mathcal O$-move (or its inverse). 
\vskip2mm
%%%
\item[(iii)] The {\em Quotient Knot Graph} $Q\mathcal{K}_{\mathcal O}^{\mathbb I}$ associated to the unknotting operation $\mathcal O$ and the collection of knot invariants $\mathbb I$ is the graph whose vertices are equivalence classes $[K]_\mathcal O^\mathbb I$ of knots $K$, by which a pair of knots $K$ and $K'$ are equivalent if $\mathcal I_i(K) = \mathcal I_i(K')$ for all $i=1, \dots, m$. Two equivalence classes $[K]_\mathcal O^\mathbb I$ and $[K']_\mathcal O^\mathbb I$ span an edge if there exist knots $L$ and $L'$ equivalent to $K$ and $K'$ respectively, and such that $L$ and $L'$ possess diagrams related by an $\mathcal O$-move (or its inverse). 
\end{itemize}
We shall collectively refer to these 3 types of graphs as {\em Knot Graphs}. 
\end{definition} 
%%%
\begin{remark}
In part (iii) of the definition above, we assume that all invariants $\mathcal I_i$ in the collection $\mathbb I$ are preserved under orientation reversal.
\end{remark}
%%%
\begin{remark}
Following \cite{HirasawaUchida} one can define the structure of a simplicial complex on all the knot graphs, by letting a collection of $n+1$ vertices span an $n$-simplex if each pair of vertices spans an edge. This leads to very rich simplicial structures on the knot graphs. For example, it has been shown that in the knot graphs $\mathcal K_{\slashoverback}$, $\mathcal K_{H(n)}$, $\mathcal K_\#$ ($\#$ = the pass move), $\mathcal K_{RCC}$ (``RCC" = Region Crossing Change) each edge of the graph lies in an $n$-simplex for any $n\in\mathbb{N}$, cf.\cite{HirasawaUchida}, \cite{ZhangYang}, \cite{ZhangYang2018}, \cite{GillEtAl} respectively.
\end{remark}

We note that the knot graphs from Definition \ref{DefinitionOfKnotGraphs} can be generalized still by allowing multiple unknotting operations $\mathcal O_1, \dots, \mathcal O_k$ to be considered simultaneously. In such knot graphs, vertices share an edge if they posses representative knots that are related by an $\mathcal O_i$-move (or its inverse) for at least one $i\in \{1, \dots, k\}$. If we let $\mathbb O = \{\mathcal O_1,\dots, \mathcal O_m\}$, we denote the resulting knot graphs by $\mathcal K_{\mathbb O}$ or $Q\mathcal{K}_{\mathbb O}$. An instance of this type of graph has been studied in \cite{Yoshiyuki}. 
\vskip3mm
Different choices of unknotting operations $\mathcal O$ and knot invariants $\mathcal I$ lead to infinitely many examples of knot graphs. While it would be desirable to understand hyperbolicity properties of these general knot graphs, presently existing techniques place a limit on what can be proved. We therefore restrict our considerations on what we perceive as the most important examples. These rely on the principal unknotting operations studied in knot theory, namely the 

\[
\text{Unknotting Operations}   = \left\{  
\begin{array}{rcl}
\slashoverback & = & \text{ Crossing change operation.} \cr
H(2) & = &\text{ The non-coherent (or non-orientable)} \cr
&  &  \text{ band move.}
\end{array} \right. 
\]

As indicated by our choice of notation, the {\em non-coherent} or {\em non-orientable} band move corresponds to the $H(2)$-move from Figure \ref{FigureOfH(n)Moves}. As some of our results readily generalize from the $H(2)$-move to the $H(n)$-moves for all $n\ge 2$, we shall consider the latter unknotting operations as well. The knot graphs in Parts (i) and (ii) from Definition \ref{DefinitionOfKnotGraphs} are fully determined by the choice of one of these unknotting operations.  

To motivate our choice of knot invariants used in the construction of the knot graphs from Part (iii) of Definition \ref{DefinitionOfKnotGraphs}, we first make this definition.

\begin{definition} \label{DefinitionOfCompatibility}
An unknotting operation $\mathcal O$ and an integer valued knot invariant $\mathcal I$ are said to be {\em compatible} if changing a knot $K$ by a single $\mathcal O$-move (or its inverse) changes $\mathcal I(K)$ by at most 1. Said differently, if $K$ and $K'$ are knots related by an $\mathcal O$-move or its inverse, then  $|\mathcal I(K) - \mathcal I(K')|\le 1$.  A quotient graph $Q\mathcal K^\mathbb I_\mathcal O$, with $\mathbb I = \{\mathcal I_1, \dots, \mathcal I_m\}$  is said to be {\em compatible} if $\mathcal O$ is compatible with every $\mathcal I_j \in \mathbb I$.  
\end{definition}

We have found that knot graphs $\mathcal K_\mathcal O^\mathbb I$ that are not compatible, are rather difficult to understand, and some exhibit rather surprising properties (see Example \ref{ExampleForNonCompatibleCase}). Accordingly, having chosen our unknotting operations to be the crossing-change operation and the $H(n)$-moves, we were compelled to pick knot invariants from among those compatible with said unknotting operations. Specifically, we consider these knot invariants:
\[
\text{Knot Invariants} =\left\{  
\begin{array}{rl}
g_4 & =  \text{ Orientable smooth 4-genus} \cr
\gamma_4 & = \text{ Non-orientable smooth 4-genus} \cr
u & = \text{ Unknotting number} \cr
\tau & = \text{ Ozsv\'ath-Szab\'o's tau invariant} \cr
s & = \text{ Rasmussen's $s$ invariant}
\end{array} \right. 
\]

Of these, $g_4, u, \tau, s/2$ are compatible with the crossing-change operation, while $\gamma_4$ is compatible with non-coherent band moves.

%%%%%%%%%%%%%%%%
%%%%%%%%%%%%%%%%

\subsection{ $H(n)$-moves}
\label{sec:H(n)moves}

The $H(n)$-move, $n\ge 2$, is defined in Figure \ref{FigureOfH(n)Moves}. We adopt the convention from \cite{HosteNakanishiTaniyama} that only those $H(n)$-moves are allowed which preserve the number of components. The $H(2)$-move is called a {\em noncoherent} or {\em nonorientable band move}, as it is realized by attaching a band to the knot, in such a way that the orientation of the band agrees with that of the knot at one of its ends, and disagrees at the other. 

The $H(n)$-moves were introduced and studied by Hoste, Nakanishi and Taniyama in \cite{HosteNakanishiTaniyama}, where they proved that each $H(n)$-move is an unknotting operation. We are thus justified in letting $\mathcal K_{H(n)}$ denote the resulting $H(n)$-Gordian knot graphs, and we denote the induced metric on $\mathcal K_{H(n)}$ by $d_n$. Hoste, Nakanishi and Taniyama  established several estimates for the $H(n)$-unknotting number $u_n(K)$, defined as $d_n(K,U)$ (with $U$ the unknot). The following theorem is proved in \cite{HosteNakanishiTaniyama} for the case of $K'=U$, we adapt their proofs for our somewhat more general formulas. 

\begin{theorem}[Hoste, Nakanishi, Taniyama \cite{HosteNakanishiTaniyama}] \label{TheoremHosteNakanishiTaniyama}
Let $K, K'$ be a pair of knots and $n\ge 2$ an integer.
\begin{itemize}
\item[(i)] An $H(n)$-move can be realized by an $H(n+1)$-move. In particular

\[ d_{n}(K, K')\geq d_{n+1}(K, K').\]  

\item[(ii)] $\lim_{n\rightarrow \infty} d_{n}(K, K')=1$.
\item[(iii)]  If $n\ge 3$ then $(n-1)d_{n}(K, K')\geq \frac{2}{3} d_{2}(K, K').$
\end{itemize}
\end{theorem}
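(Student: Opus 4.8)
The plan is to treat the three parts in order, with parts (ii) and (iii) resting on the geometric realization statements behind part (i).

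For part (i), I would exhibit the local picture showing that the tangle replacement defining an $H(n)$-move is itself a special case of an $H(n+1)$-move. Concretely, given the $n$ parallel strands carrying the $H(n)$ pattern, I introduce one additional strand running parallel to them which participates trivially: both before and after the move this extra strand bounds a trivial arc that can be absorbed by planar (Reidemeister) isotopy, and it is inserted so as to preserve the component count required by the HNT convention. Thus any diagrammatic instance of an $H(n)$-move, and symmetrically of its inverse, is also an instance of an $H(n+1)$-move. The distance inequality is then immediate: a shortest $H(n)$-sequence realizing $d_n(K,K')$ consists of moves each of which is simultaneously an $H(n+1)$-move, so the same sequence bounds $d_{n+1}(K,K')$, giving $d_{n+1}(K,K') \le d_n(K,K')$.

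For part (ii), first note that for distinct $K,K'$ the sequence $\left(d_n(K,K')\right)_{n\ge 2}$ is non-increasing by part (i) and bounded below by $1$ (distinct vertices of a graph lie at distance at least $1$), hence converges to some $L \ge 1$. It remains to show $d_n(K,K') = 1$ for all sufficiently large $n$, i.e. that $K$ and $K'$ are related by a single $H(n)$-move once $n$ is large. I would start from the fact that $K$ and $K'$ are related by a finite sequence of crossing changes, performed at crossings $x_1,\dots,x_c$ of a fixed diagram, together with the fact that a crossing change is realizable as an $H(3)$-move. The key step is to isotope the diagram so that all of $x_1,\dots,x_c$ are gathered inside a single ball through which a bundle of parallel strands passes, and then to verify that one $H(n)$-move in that ball, with $n$ growing linearly in $c$, simultaneously effects all $c$ crossing changes. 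This yields $d_n(K,K') \le 1$ for large $n$, forcing $L = 1$.

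For part (iii), the core is a realization lemma in the opposite direction: a single $H(n)$-move with $n \ge 3$ can be realized by a number of $H(2)$-moves (noncoherent band moves) that grows linearly in $n$, on the order of $\tfrac{3}{2}(n-1)$ of them. I would prove this by decomposing the $n$-strand tangle replacement into elementary band moves between consecutive strands and bounding the resulting count, then apply it along a geodesic of length $d_n(K,K')$: replacing each of the $d_n(K,K')$ moves by its $H(2)$-realization produces an $H(2)$-sequence from $K$ to $K'$, and accounting for an economy at the ends of the sequence gives $d_2(K,K') \le \tfrac{3}{2}(n-1)\,d_n(K,K') - \tfrac{3}{2}$, which rearranges exactly to $(n-1)d_n(K,K') \ge \tfrac{2}{3}d_2(K,K') + 1$. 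I expect the main obstacle to be precisely these two realization lemmas, which carry all the geometric content: the bundling of the crossing changes into a single $H(n)$-move in part (ii), and especially the \emph{sharp} count in part (iii), where obtaining the precise constant $\tfrac{2}{3}$ together with the additive $+1$ (rather than a cruder linear bound) requires careful diagrammatic bookkeeping and attention to the behavior at the endpoints of the move sequence.
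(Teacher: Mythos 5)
Your part (i) is essentially the paper's argument (both reduce to the Hoste--Nakanishi--Taniyama observation that an extra strand can be fed into the pattern, cf.\ their Lemma 2), and your skeleton for (ii) and (iii) --- monotonicity plus eventual distance one, followed by a conversion of $H(n)$-moves into roughly $\tfrac{3}{2}(n-1)$ $H(2)$-moves --- has the right shape. But both realization lemmas you defer contain genuine gaps, and in each case the missing idea is the same: decomposition into noncoherent band attachments whose roots can be slid together.

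For (ii), you propose to gather the crossing-change sites in one ball and ``verify that one $H(n)$-move simultaneously effects all $c$ crossing changes.'' As stated this is not verifiable: the after-pattern of an $H(n)$-move is the one specific pattern of Figure \ref{FigureOfH(n)Moves}(b), whereas $c$ simultaneous crossing changes replace $2c$ parallel strands by $c$ disjoint clasps, which is a different tangle. The paper's proof never mentions crossing changes. Instead, each $H(n)$-move in a minimal sequence is decomposed into $n-1$ noncoherent band attachments (i.e.\ $H(2)$-moves, Figure \ref{FigureOfH(n)MovesRealizedByH(2)Moves}); the bands may then be slid along the knot until all $(n-1)d_n(K,K')$ roots are gathered near a single point, and a family of bands with gathered roots constitutes a single $H$-move of the corresponding order (Figure \ref{Pic5HnConvergesTo1}), giving $d_m(K,K')=1$ for $m=(n-1)d_n(K,K')$. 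The mobility of band roots is exactly what legitimizes ``gathering''; clasps have no such flexibility, so your version would in any case have to be rewritten in terms of bands --- which is the content you left unproved.

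For (iii), converting each of the $d_n(K,K')$ moves of a geodesic separately, as you propose, yields only $d_2(K,K') \le \tfrac{3}{2}(n-1)d_n(K,K')$, i.e.\ $(n-1)d_n \ge \tfrac{2}{3}d_2$ \emph{without} the $+1$, and the ``economy at the ends of the sequence'' you invoke to recover it does not exist: the moves along the geodesic are converted independently, and the endpoints play no special role. The paper obtains the $+1$ by a different mechanism, namely the gathering step of (ii): the entire geodesic is first replaced by a \emph{single} $H(m)$-move with $m=(n-1)d_n(K,K')$, and a single $H(m)$-move costs only $m-1$ band attachments; removing these $(n-1)d_n(K,K')-1$ bands at cost at most $\tfrac{3}{2}$ apiece gives $d_2 \le \tfrac{3}{2}\left((n-1)d_n - 1\right)$, which is the stated inequality. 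Note also where the constant $\tfrac{3}{2}$ comes from, which your sketch glosses over: removing a band is an admissible $H(2)$-move only when it preserves the number of components; bands that do not must be treated in innermost pairs (Figure \ref{Pic6SpecialBands}), each pair removable by three $H(2)$-moves. So both the multiplicative constant and, especially, the additive constant require the band formalism and the gathering trick, not per-move bookkeeping along the geodesic.
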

%%%
%%%
\begin{proof}
(i) The fact that an $H(n)$-move can be realized as an $H(n+1)$-move is shown in Lemma 2 and Figure 10 in \cite{HosteNakanishiTaniyama}. From this the inequality $d_n(K,K') \ge d_{n+1}(K,K')$ is obvious. 
\vskip1mm
\noindent (ii) For $K'=U$ this formula is the content of Theorem 6 in \cite{HosteNakanishiTaniyama}. We modify the proof of the said theorem to obtain the claimed result. Each $H(n)$-move can be obtained by a sequence of $(n-1)$ $H(2)$-moves, each of which is realized by attaching a noncoherent band as in Figure \ref{FigureOfH(n)MovesRealizedByH(2)Moves}. 
%%%
%%%
\begin{figure} 
\centering
\includegraphics[width=9cm]{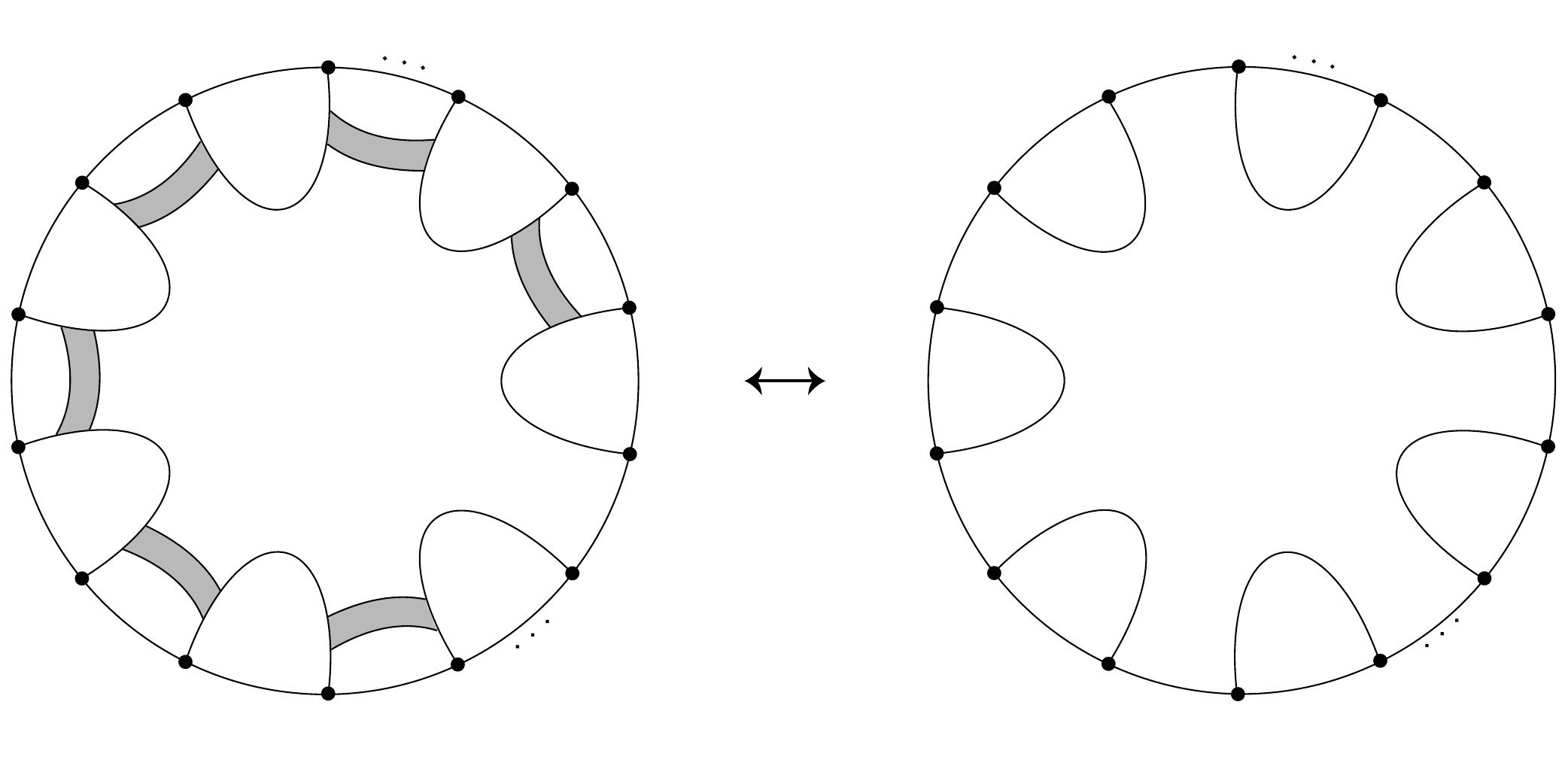}
\caption{An $H(n)$-move can be realized by $n-1$ $H(2)$-moves, i.e. by attaching $n-1$ noncoherent bands, as indicated. Pictured here is the case of $n=7$.  } \label{FigureOfH(n)MovesRealizedByH(2)Moves}
\end{figure}
%%%
%%%
Let $n\ge 2$ be arbitrary. Since one can pass from a diagram for $K$ to a diagram for $K'$ by applying $d_{n}(K, K')$ $H(n)$-moves, it follows that the diagrams of $K$ and $K'$ are related by $(n-1)\cdot d_n(K,K')$ noncoherent band attachments. By sliding bands if necessary, we may assume that all the bands are disjoint. Furthermore, we may gather the root of each band near one point of the knot $K'$ as in Figure \ref{Pic5HnConvergesTo1}. 
%%%
%%%
\begin{figure} 
\centering
\includegraphics[width=10cm]{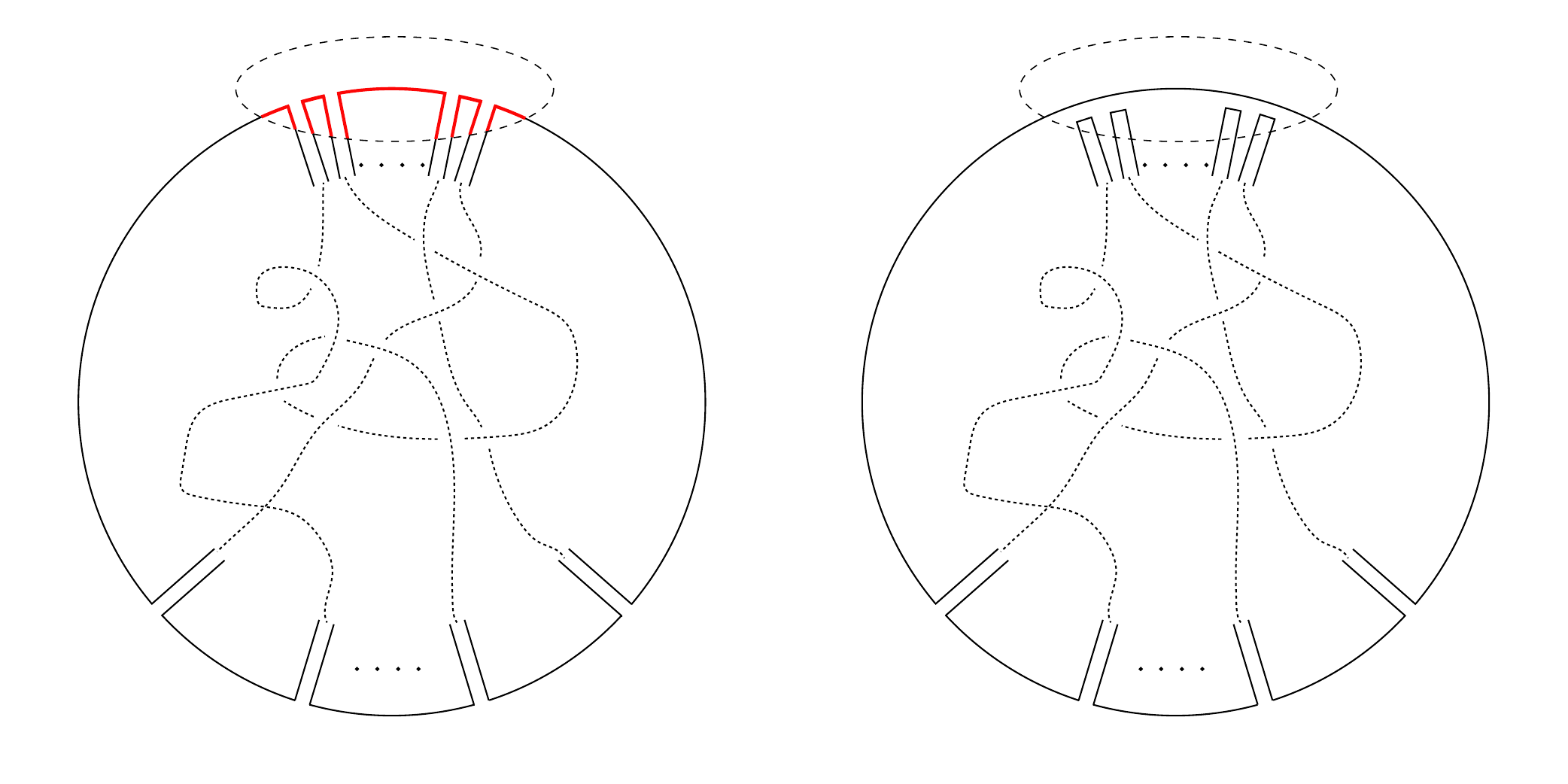}
\put(-220,-10){(a)} 
\put(-75,-10){(b)} 
\put(-290,80){$K'$} 
\put(-10,80){$K$}
\caption{If a knot $K'$ can be obtained from $K$ by $m$ $H(n)$-moves, then $K'$ can also be obtained from $K$ by $m(n-1)$ $H(2)$-moves, each of which is realized by the attaching of a noncoherent band. If the roots of the bands are gathered as shown, the totality of all $m(n-1)$ band moves is accomplished by a single $H(m(n-1)+1)$-move, the one inside the dashed oval. Observe that the number of arcs (colored in red in Figure (a)) inside the dashed oval equals 1 plus the number of bands.  } \label{Pic5HnConvergesTo1}
\end{figure}
%%%
%%%
It is now an easy observation that all $(n-1)d_n(K,K')$ noncoherent band moves are realized by a single $H((n-1)d_n(K,K')+1)$-move (see again Figure \ref{Pic5HnConvergesTo1}), showing that 
\[ d_{(n-1)d_n(K,K')+1}(K,K') = 1. \]

The proof of Part (ii) of the present theorem follows from this formula and Part (i). 
\vskip1mm
\noindent (iii) This formula for $K'=U$ is Part (6) of Theorem 7 in \cite{HosteNakanishiTaniyama}, and the proof is readily adapted for our purposes. Let $m\ge 2$ be such that $d_{m}(K, K')=1$ (such an $m$ exists by Part (ii), e.g. $m=(n-1)d_n(K,K')+1$ for any $n\ge 2$). Then the diagrams of $K$ and $K'$ can be related by a single $H(m)$-move, and hence also by $m-1$ band attachments. Individually, each of these $m-1$ band attachments may be component preserving or may change the number of components by one. In particular, cutting one band either yields another knot or a two-component link. Each band of the first kind can be removed by a single $H(2)$-move. Consider then a band of the second variety, and specifically consider an \lq\lq inner-most\rq\rq one, i.e. a band whose roots divide the knot into  two arcs, each of which contains at most one root of any other band. It must be then that there exists a pair of bands as in Figure \ref{Pic6SpecialBands}. 
%%%
%%%
\begin{figure} 
\centering
\includegraphics[width=5cm]{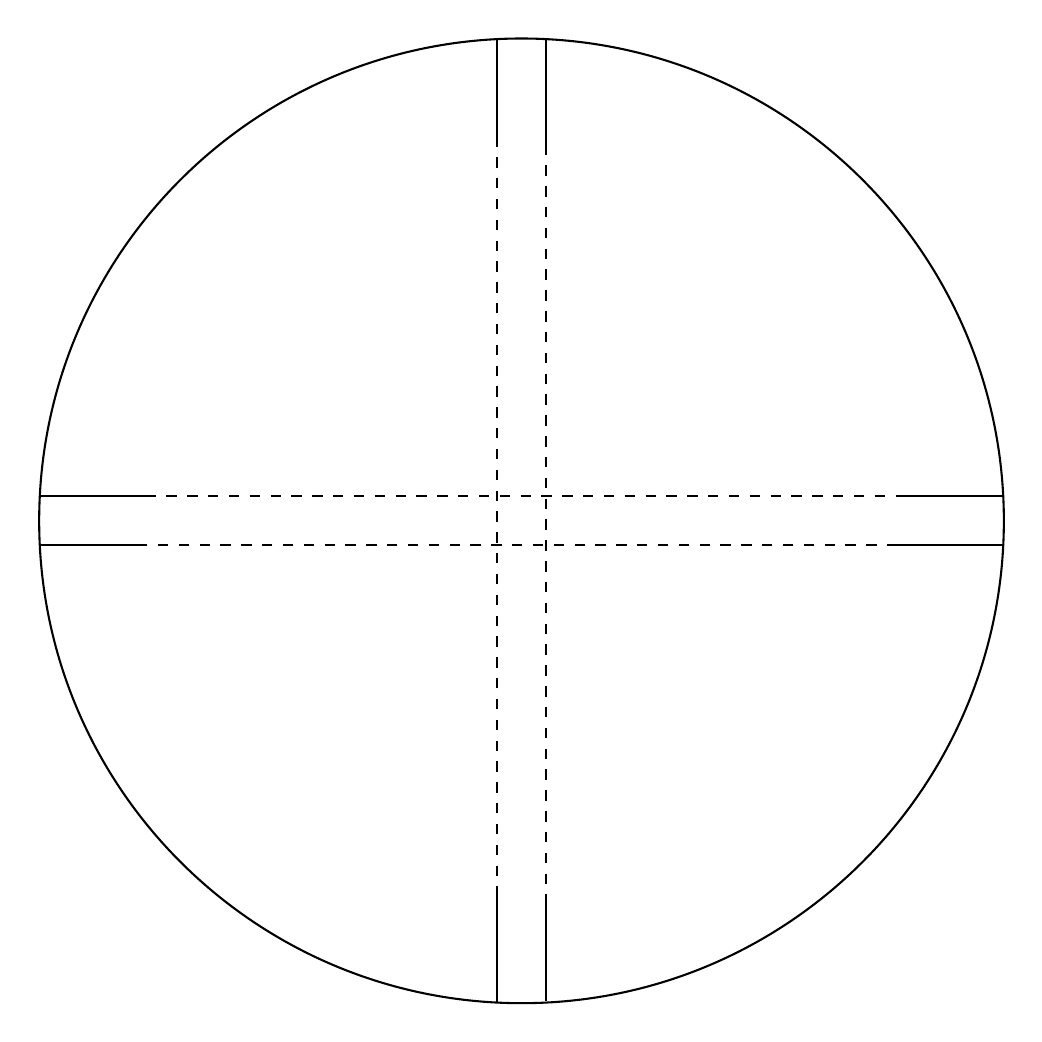}
\caption{A special pair of bands used to related the diagrams of $K$ and $K'$. } \label{Pic6SpecialBands}
\end{figure}
%%%
%%%    
It is shown in \cite{HosteNakanishiTaniyama}, Figure 15, that this pair of bands can be removed by 3 $H(2)$-moves. Therefore, we can remove noncoherent bands with either a single $H(2)$-move, or we can remove pairs of noncoherent bands with 3 $H(2)$-moves. Since removing all $m-1$ noncoherent bands changes a diagram for $K$ to one for $K'$, we obtain the inequality 
\[ \frac{3}{2}(m-1)\ge d_2(K,K'). \]
The claim now follows since we may pick $m=(n-1)d_n(K,K')+1$ for any $n\ge 3$. 
\end{proof}
%%%
%%%
A direct and important consequence of the preceding theorem is this observation. 

\begin{corollary}
\label{quasigordian}
The $H(n)$-Gordian graph $\mathcal K_{H(n)}$ for $n\ge 3$ is %$(3(n-1)/2, 0)$ 
quasi-isometric to the $H(2)$-Gordian graph $\mathcal K_{H(2)}$. 
\end{corollary}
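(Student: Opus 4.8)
We want to show that for $n \ge 3$, the graphs $\mathcal{K}_{H(n)}$ and $\mathcal{K}_{H(2)}$ are quasi-isometric. Both graphs share the same vertex set (all unoriented knots), so the natural candidate for the quasi-isometry is the identity map on vertices, $f = \mathrm{id}$. Since this map is a bijection on vertices that is automatically $1$-dense (indeed surjective) in the target, the surjectivity/$C$-density condition is free, and the entire content is to produce constants $a \ge 1$, $b \ge 0$ with
$$\frac{1}{a}\, d_2(K,K') - b \le d_n(K,K') \le a\, d_2(K,K') + b$$
for all pairs of knots $K, K'$.

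**The plan.** The two inequalities I need come directly from Theorem~\ref{TheoremHosteNakanishiTaniyama}. First I would read off the upper bound on $d_n$ in terms of $d_2$ from part (iii): rearranging $(n-1) d_n(K,K') \ge \tfrac{2}{3} d_2(K,K') + 1$ gives
$$d_n(K,K') \ge \frac{2}{3(n-1)}\, d_2(K,K') + \frac{1}{n-1},$$
which bounds $d_n$ below by a positive linear function of $d_2$. Second, for the reverse direction I would use the relation between $d_m$ and $d_2$ extracted from the proof of part (ii): the argument there shows $\tfrac{3}{2}(m-1) \ge d_2(K,K')$ whenever $d_m(K,K') = 1$, and more generally any $H(n)$-move decomposes into $n-1$ noncoherent ($H(2)$) band attachments, so that a sequence of $d_n(K,K')$ many $H(n)$-moves can be converted into at most $\tfrac{3}{2}(n-1)\, d_n(K,K')$ many $H(2)$-moves (removing each band with one $H(2)$-move, or each special pair with three). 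This yields
$$d_2(K,K') \le \tfrac{3}{2}(n-1)\, d_n(K,K').$$
Combining the two displayed inequalities gives both halves of the quasi-isometry double inequality with explicit constants depending only on $n$.

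**Assembling the constants.** From $d_2 \le \tfrac{3}{2}(n-1)\, d_n$ I get the lower-bound half $d_n \ge \tfrac{2}{3(n-1)} d_2$, which I can write as $\tfrac{1}{a} d_2 - b \le d_n$ with $a = \tfrac{3}{2}(n-1)$ and $b = 0$. For the upper-bound half I need $d_n \le a\, d_2 + b$; here I would invoke part (i), which gives the monotonicity $d_n(K,K') \le d_2(K,K')$ (since $d_2 \ge d_3 \ge \cdots$), so in fact $d_n \le d_2$ outright, comfortably of the form $a\, d_2 + b$ with $a = 1$, $b = 0$. Taking $a = \max\{\tfrac{3}{2}(n-1), 1\} = \tfrac{3}{2}(n-1)$ and $b = 0$ for both inequalities simultaneously, together with $C = 0$ density, shows the identity is a quasi-isometry (indeed bi-Lipschitz). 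One then concludes by Proposition~\ref{hyperbo} that hyperbolicity transfers between the two graphs.

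**The main obstacle.** The genuinely nontrivial input is already packaged inside the proof of Theorem~\ref{TheoremHosteNakanishiTaniyama}, so the remaining work is light. The one point requiring care is that the estimate $d_2 \le \tfrac{3}{2}(n-1)\, d_n$ must hold as a bound on the full pairwise distance, not merely in the form $(n-1)d_n \ge \tfrac{2}{3} d_2 + 1$ as stated; I would verify that the band-counting argument of part (iii) applies verbatim to a geodesic realizing $d_n(K,K')$ (concatenating the $H(2)$-move decompositions of each of the $d_n(K,K')$ intermediate $H(n)$-moves), since it is the same sliding-and-gathering construction applied step by step. Once that is checked, the two linear inequalities are immediate and the corollary follows with no further estimation.
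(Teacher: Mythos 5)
Your proposal is correct and is essentially the paper's own argument: the corollary is stated there as a direct consequence of Theorem \ref{TheoremHosteNakanishiTaniyama}, with the identity map on vertices made bi-Lipschitz by combining part (i) (which gives $d_n \le d_2$) with part (iii) (which rearranges to $d_2 \le \tfrac{3}{2}(n-1)\,d_n$). The caution in your final paragraph is unnecessary, since part (iii) is already stated and proved in the paper for arbitrary pairs $K, K'$ (not just $K'=U$), so it can be cited directly without revisiting the band-counting construction.
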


\begin{figure} 
\centering
\includegraphics[width=5.5cm]{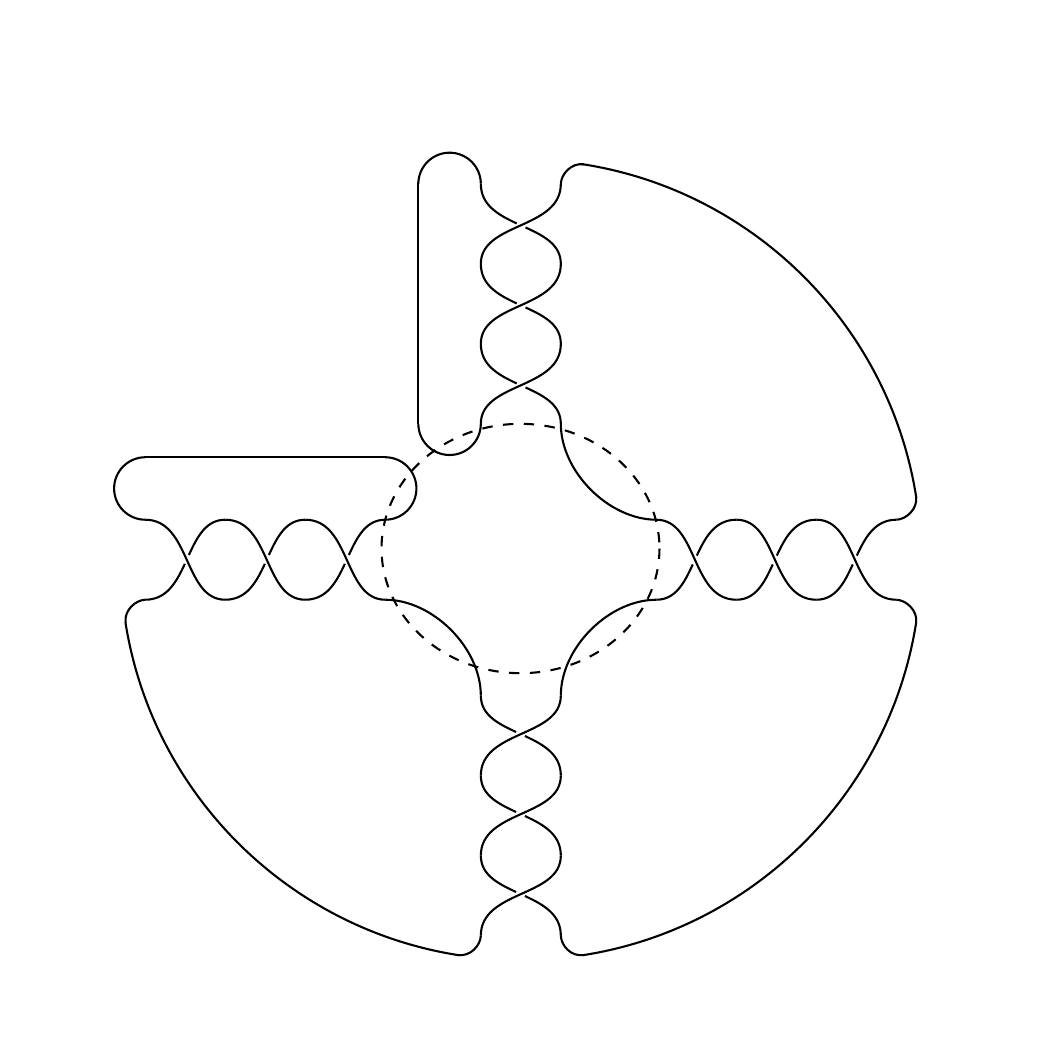}
\caption{A 4-fold connected sum of $T(3,2)$ with itself, can be unknotted by the single $H(5)$-move indicated in the dashed oval. A generalization of this picutre shows that the connected sum $\#^{n-1}T(2,k)$ for any odd $k$ and $n\ge 2$, can be unknotted by a single $H(n)$-move. } \label{Pic7HnMovesOnConnnectedSumsOfTorusKnots}
\end{figure}

%%%%%%%%%%%%%%%%%%%%%%%
%%%%%%%%%%%%%%%%%%%%%%%
%%%%%%%%%%%%%%%%%%%%%%%

\subsection{Bounds on $d_n$ coming from cyclic branched covers}
\label{sec:bounds}

For a knot $L$, let $\Sigma_m (L)$ denote the closed 3-manifold obtained as the $m$-fold cyclic cover of $S^3$ with branching set the knot $L$. If $m=2$ we simply write $\Sigma (L)$ instead of $\Sigma _2(L)$. Let $e_m(L)$ denote the minimum number of generators of $H_1(\Sigma_m(L);\Z)$ and let $e_m^p(L)$ denote the minimum number of generators of $H_1(\Sigma_m(L);\Z_p)$ (here and elsewhere $\mathbb Z_p$ denotes the cyclic group on $p$ elements).

\begin{proposition}\cite[Theorem 4]{HosteNakanishiTaniyama} \label{PropositionBounds}
\label{hnt}
Let $K$ and $K'$ be a pair of knots and let $m, n \ge 2$, then 
\[ \frac{|e_m(K)-e_m(K')|}{(n-1)(m-1)} \leq d_n(K,K'). \]
\end{proposition}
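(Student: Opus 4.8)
The plan is to bound the change in $e_m$ under a single $H(n)$-move, and then to iterate this bound over a minimal sequence of $H(n)$-moves realizing $d_n(K,K')$. The key homological input is that an $H(n)$-move can be realized, as recalled in the proof of Theorem \ref{TheoremHosteNakanishiTaniyama}(ii), by attaching $n-1$ noncoherent bands. Attaching a band to the knot is a surgery-type modification of the branching set, and I expect that passing to the $m$-fold branched cover $\Sigma_m$ translates each such band attachment into a surgery on $\Sigma_m$ along a framed link whose components are the lifts of the band's core. Since the branched cover is $m$-fold, a single band in $S^3$ lifts to at most $m-1$ curves in $\Sigma_m$ (one fewer than $m$ because of the branching behavior along the fixed-point set), and surgery along a single curve changes the minimal number of generators of $H_1$ by at most $1$.

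First I would establish the per-move estimate
\[
|e_m(L) - e_m(L')| \le (n-1)(m-1)
\]
whenever $L$ and $L'$ differ by a single $H(n)$-move. The argument is: realize the $H(n)$-move by $n-1$ band attachments; each band attachment alters $\Sigma_m$ by surgery along the $m-1$ lifts of the band core; each of those $m-1$ surgeries changes the minimal generating number of $H_1(\Sigma_m;\Z)$ by at most $1$ (attaching a 2-handle can kill at most one generator, and reversing the move shows the same in the other direction); hence one band accounts for a change of at most $m-1$, and $n-1$ bands for at most $(n-1)(m-1)$. This is the heart of the matter and where I would cite \cite{HosteNakanishiTaniyama}, Theorem 4, for the underlying covering-space computation, since that is precisely the $K'=U$ instance being generalized.

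Second, I would run the triangle-type inequality over a geodesic in $\mathcal K_{H(n)}$. Let $d = d_n(K,K')$ and choose a shortest chain of knots $K = L_0, L_1, \dots, L_d = K'$ with consecutive terms related by a single $H(n)$-move. Applying the per-move estimate and the ordinary triangle inequality for absolute values gives
\[
|e_m(K) - e_m(K')| \le \sum_{i=1}^{d} |e_m(L_{i-1}) - e_m(L_i)| \le d \cdot (n-1)(m-1).
\]
Dividing by $(n-1)(m-1)$ yields the desired inequality
\[
\frac{|e_m(K)-e_m(K')|}{(n-1)(m-1)} \le d_n(K,K').
\]

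The main obstacle I anticipate is making precise the claim that a single band attachment, after lifting to the $m$-fold branched cover, changes the minimal generating number of $H_1$ by at most $m-1$. One must verify that the band lifts to exactly the expected number of arcs and that the resulting modification of $\Sigma_m$ is genuinely a surgery (or handle attachment) whose effect on $H_1$ is controlled, rather than something that could alter the first homology more drastically. Rather than reprove the covering-space geometry from scratch, I would lean on the $K'=U$ computation of \cite{HosteNakanishiTaniyama} and simply observe that their argument is local to the band move and hence does not use that one of the knots is the unknot; the generalization to arbitrary $K'$ is then formal, exactly as in the adaptations carried out in Theorem \ref{TheoremHosteNakanishiTaniyama}.
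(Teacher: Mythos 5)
Your second step---telescoping $|e_m(K)-e_m(K')|$ along a minimal chain of $H(n)$-moves and dividing by $(n-1)(m-1)$---is exactly the paper's argument. The problem is your derivation of the per-move bound. You factor the $H(n)$-move into $n-1$ band moves performed one at a time and iterate a per-band estimate, but the intermediate stages of such a factorization are in general links with more than one component, not knots; the paper itself notes this in the proof of Theorem \ref{TheoremHosteNakanishiTaniyama}(iii), where cutting a single band ``either yields another knot or a two-component link.'' For a link there is no canonical $m$-fold cyclic branched cover (one must choose a surjection of the first homology of the complement onto $\Z_m$), so the quantity $e_m$ whose differences you sum is not even defined at the intermediate stages, and your telescoping estimate for a single $H(n)$-move does not parse as written. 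A second, smaller inaccuracy: the core of a band is an arc with both endpoints on the branch set, so its preimage in $\Sigma_m$ is not a disjoint union of $m-1$ closed curves but a graph consisting of $m$ arcs sharing two endpoints; the correct statement is that the band move effects a surgery on a genus-$(m-1)$ handlebody, namely the $m$-fold branched cover of a ball neighborhood of the band meeting the branch set in two arcs.

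Both difficulties evaporate if one does not decompose at all, which is what the paper (following \cite{HosteNakanishiTaniyama}) does: take the $m$-fold branched cover of the single ball (``thickened disk'') containing the whole local picture of Figure \ref{FigureOfH(n)Moves}, which meets the branch set in $n$ arcs. That cover is a handlebody of genus $(m-1)(n-1)$, so $\Sigma_m(L)$ and $\Sigma_m(L')$ are related by a single handlebody surgery, and Lemma 3 of \cite{HosteNakanishiTaniyama} yields $|e_m(L)-e_m(L')|\le (m-1)(n-1)$ in one stroke, with both ends honest knots. If you insist on the band picture, it can be repaired by attaching all $n-1$ bands disjointly and simultaneously (as in Figure \ref{FigureOfH(n)MovesRealizedByH(2)Moves}), so that one passes directly from one knot to the other and the surgery region lifts to a disjoint union of $n-1$ genus-$(m-1)$ handlebodies of total genus $(n-1)(m-1)$---but at that point you have reconstructed the paper's argument rather than given an alternative to it.
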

%%%
%%%
\begin{proof}
The case of $K'=U$ is Theorem 4 in \cite{HosteNakanishiTaniyama}, and we use its proof as a starting point for the proof presented here. Suppose a single $H(n)$-move changes a knot $L$ to a knot $L'$. Then $\Sigma _m(L)$ and $\Sigma _m(L')$ are related by a surgery on a handlebody of genus $(m-1)(n-1)$ (as the $m$-fold branched cover of a \lq\lq thickened disk\rq\rq containing the loops as in Figure \ref{FigureOfH(n)Moves} is a handlebody of said genus).  Lemma 3 from \cite{HosteNakanishiTaniyama} shows that in this case the inequality $|e_m(L)-e_m(L')| \le (m-1)(n-1)$ holds. 

If $K'$ is obtained from $K$ by $d_n(K,K')$ $H(n)$-moves, with intermediate knots 
\[
	K=K_0 \to K_1 \to \dots \to K_{d_n(K,K')}=K',
\] 
then 
\[
	|e_m(K)-e_m(K')| \le \sum _{i=1}^{d_n(K,K')} |e_m(K_{i-1}) - e_m(K_i)| \le d_n(K,K')(m-1)(n-1),
\]
proving the claim. 
\end{proof}

Recall that the invariants $e_2^3(K)$ and $e_2^5(K)$ are the minimum number of generators of the first homology of the branched double cover with coefficients in $\mathbb{Z}/3$ and $\mathbb{Z}/5$, respectively. As it turns out, these invariants are related to certain evaluations of the Jones and Q-polynomials (see \cite{LickorishMillett}, \cite{Jones-Q}). In \cite{AK}, Abe-Kanenobu give criteria for knots to be related by an $H(2)$-move in terms of evaluations of these polynomials. This can in turn be rephrased as a lower bound in terms of the invariants $e_2^3(K)$ and $e_2^5(K)$ as follows:

\begin{proposition}\cite[Corollary 5.6, Corollary 5.10]{AK} 
\label{CorollaryAboutObstructions}
Let $K$ and $K'$ be a pair knots. Then
\begin{enumerate}
	\item $|e_2^3(K)-e_2^3(K')|\leq d_2(K, K')$, and 
	\vskip2mm
	\item $|e_2^5(K)-e_2^5(K')|\leq d_2(K, K')$. 
\end{enumerate}
\end{proposition}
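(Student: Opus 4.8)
\section*{Proof proposal}

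The plan is to follow the template of the proof of Proposition~\ref{PropositionBounds}, reducing the global inequality to a single-move estimate and then telescoping. The crucial simplification available here, absent in the integral case, is that for a prime $p$ the ring $\mathbb{Z}_p = \mathbb{F}_p$ is a field, so that $H_1(\Sigma_2(K);\mathbb{F}_p)$ is an $\mathbb{F}_p$-vector space and the minimal number of generators is simply its dimension:
$$e_2^p(K) = \dim_{\mathbb{F}_p} H_1(\Sigma_2(K);\mathbb{F}_p).$$
Thus the whole argument becomes a question of linear algebra, which I expect to be cleaner than the module-theoretic estimate (Lemma~3 of \cite{HosteNakanishiTaniyama}) invoked for the integral invariants $e_m$.

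First I would establish the single-move bound: if $L$ and $L'$ differ by one $H(2)$-move, then $|e_2^p(L)-e_2^p(L')|\le 1$. Specializing the surgery description from the proof of Proposition~\ref{PropositionBounds} to $m=n=2$, the branched double covers $\Sigma_2(L)$ and $\Sigma_2(L')$ are related by surgery on a handlebody of genus $(m-1)(n-1)=1$, i.e.\ a solid torus. Writing $M=\Sigma_2(L)\setminus V$ for the complement of this solid torus, both $\Sigma_2(L)$ and $\Sigma_2(L')$ are Dehn fillings of the same manifold $M$ (with torus boundary), along two slopes $\gamma$ and $\gamma'$. Filling a solid torus only adds a single relation to homology, so there are surjections
$$H_1(M;\mathbb{F}_p)\twoheadrightarrow H_1(\Sigma_2(L);\mathbb{F}_p), \qquad H_1(M;\mathbb{F}_p)\twoheadrightarrow H_1(\Sigma_2(L');\mathbb{F}_p),$$
whose kernels are spanned by the mod-$p$ reductions of $[\gamma]$ and $[\gamma']$ respectively, hence are at most one-dimensional. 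Consequently both $\dim H_1(\Sigma_2(L);\mathbb{F}_p)$ and $\dim H_1(\Sigma_2(L');\mathbb{F}_p)$ lie in the interval $[\dim H_1(M;\mathbb{F}_p)-1,\ \dim H_1(M;\mathbb{F}_p)]$, and so differ by at most $1$.

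With the single-move estimate in hand I would conclude exactly as in Proposition~\ref{PropositionBounds}: choosing a shortest chain of $H(2)$-moves
$$K=K_0\to K_1\to\cdots\to K_{d_2(K,K')}=K'$$
realizing the distance $d_2(K,K')$, the triangle inequality gives
$$|e_2^p(K)-e_2^p(K')|\le\sum_{i=1}^{d_2(K,K')}|e_2^p(K_{i-1})-e_2^p(K_i)|\le d_2(K,K'),$$
which is the desired bound for $p=3$ and $p=5$ (and, in fact, for every prime $p$).

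The only genuinely delicate point is the single-move step, and specifically the claim that one $H(2)$-move presents the two branched double covers as fillings of a common complement $M$; this is precisely the $m=n=2$ instance of the handlebody-surgery picture used in Proposition~\ref{PropositionBounds}, and the genus-$1$ case is favorable because regluing a solid torus is determined by a single slope. I should note that this homological route is not the one taken by Abe--Kanenobu in \cite{AK}: they obtain their criteria through evaluations of the Jones and $Q$-polynomials, which by \cite{LickorishMillett, Jones-Q} determine $e_2^3$ and $e_2^5$ respectively, explaining the special role of the primes $3$ and $5$ in their formulation. One could alternatively quote their Corollaries~5.6 and~5.10 directly and translate the polynomial inequalities into the stated homological form; however, the surgery argument above is self-contained and exposes why the bound in fact holds for all primes.
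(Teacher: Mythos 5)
Your proof is correct, but note that it takes a genuinely different route from the paper, which in fact offers no proof of this statement at all: the proposition is quoted directly from Corollaries 5.6 and 5.10 of Abe--Kanenobu \cite{AK}, whose arguments run through certain evaluations of the Jones and $Q$-polynomials; by \cite{LickorishMillett, Jones-Q} those evaluations compute precisely $e_2^3$ and $e_2^5$, which is why only the primes $3$ and $5$ appear in the statement. Your argument instead specializes the handlebody-surgery picture from the proof of Proposition \ref{PropositionBounds} to $m=n=2$: both local tangles of an $H(2)$-move have solid-torus double branched covers, so $\Sigma_2(L)$ and $\Sigma_2(L')$ are Dehn fillings of a common torus-boundary manifold $M$ along slopes $\gamma,\gamma'$, and Mayer--Vietoris gives $H_1(\Sigma_2(L);\mathbb{F}_p)\cong H_1(M;\mathbb{F}_p)/\langle[\gamma]\rangle$, so each filling changes $\dim_{\mathbb{F}_p}H_1$ by at most one; telescoping along a minimal chain of moves then finishes the proof. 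This single-move step is exactly the field-coefficient analogue of Lemma 3 of \cite{HosteNakanishiTaniyama}, which the paper invokes for the integral invariants $e_m$, and it is cleaner over a field since minimal generating sets become bases. What your approach buys is self-containedness and generality: the bound holds for $e_2^p$ for every prime $p$, not just $p=3,5$. What the paper's citation buys is computability and finer information: tying $e_2^3$ and $e_2^5$ to polynomial evaluations makes these invariants effectively computable from diagrams, and that is the form in which Abe--Kanenobu derive their sharper band-surgery criteria. The one step you should spell out if writing this up is the solid-torus claim itself (both $H(2)$-tangles are rational tangles, or equivalently this is the genus-one case of the handlebody assertion quoted in the proof of Proposition \ref{PropositionBounds}); as stated, you inherit it from the paper rather than prove it.
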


\subsection{Homology of Brieskorn manifolds}
\label{brieskorn}
In this section, we review the algorithm for computing the homology groups of Brieskorn manifolds, following Orlik \cite{Orlik}. 

For integers $w_1, w_2, w_3 >1$,the Brieskorn manifold $\Sigma(w_1, w_2, w_3)$ is defined as 
\[
	\{(z_1, z_2, z_3)\in \mathbb C^3\,|\, z_1^{w_1} +z_2^{w_1} +z_3^{w_1}=0\} \cap S_{\varepsilon} 
\]
where $S_\eps$ is a sphere in $\mathbb C^3$ centered at $0$ and of radius $\eps>0$ chosen sufficiently small  so that the only singularity contained inside of $S_\eps$ is at $z=0$. Brieskorn manifolds are closed, oriented 3-manifolds, and by Milnor \cite{Milnor}, they can also be obtained as an $r$-fold cyclic  cover of $S^{3}$ with the branching set  the torus knot or link $T(p, q)$. 

To determine both the rank and the torsion subgroup of  $H_{1}(\Sigma(w_{1}, w_{2}, w_{3}); \mathbb{Z})$ we first proceed to define several sets of numbers following \cite{Orlik} (see also the reference \cite{Randell}), specialized here to varieties of only three variables $\{z_{1}, z_{2}, z_{3}\}$. For any ordered subset $I$ of $\{1,2,3\}$ let $\kappa(I)$ be defined as 
\[
	\kappa(I)=\sum_{J\subseteq I} (-1)^{|J|-|I|} \dfrac{\prod w_J}{ \lcm (w_J)}, 
\]   
where $w_J \subseteq \{ w_1, w_2, w_3 \}$ is the subset corresponding with indexing subset $J \subseteq  \{1,2,3\}$.
In the definition of $\kappa(I)$ we adopt the convention that $\Pi w_\emptyset / \lcm (w_\emptyset)$ equals 1. Furthermore, let 
\[
    \kappa'(I)=\left\{
                \begin{array}{ll}
                \kappa(I) & \text{ when } |I| \text{ is even, and}\\
				0 & \text{ when } |I| \text{ is odd.}
                \end{array}
              \right.
\]
Next, define inductively on $|I|$ the numbers $c(I)$ as $c(\emptyset) =1$ and  
\[
	c(I)=\frac{ \gcd (\{w_1, w_2, w_3\} - w_I)}{\prod_{J\subset I} c(J)}.
\]
Lastly, let $r=\max_{I\subseteq \{1,2,3\}} \kappa(I)$, and for  $j=1,\dots, r$ define $d_j$ as 
\[
	d_{j}=\prod_{I | \kappa'(I)  \geq j > 0} c(I).
\]
\begin{proposition}\cite[Propositions 2.6 and 3.4]{Orlik}
\label{rank and torsion}
The rank of $H_{1}(\Sigma(w_{1}, w_{2}, w_{3}); \mathbb{Z})$ equals $\kappa(\{1,2,3\})$ and the torsion subgroup of $H_{1}(\Sigma(w_{1}, w_{2}, w_{3});\mathbb Z)$ is isomorphic to 
\[
	\Z_{d_{1}}\oplus \Z_{d_{2}}\oplus \cdots \oplus \Z_{d_{r}}.
\]
\end{proposition}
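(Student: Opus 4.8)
The plan is to exploit the Seifert fibered structure carried by every Brieskorn manifold and to extract $H_1$ from the resulting presentation of its fundamental group, following Orlik \cite{Orlik}. The starting point is Milnor's observation \cite{Milnor} that the variety $V=\{z_1^{w_1}+z_2^{w_2}+z_3^{w_3}=0\}$ admits the $\mathbb C^*$-action $t\cdot(z_1,z_2,z_3)=(t^{a_1}z_1,t^{a_2}z_2,t^{a_3}z_3)$ with $a_i=\lcm(w_1,w_2,w_3)/w_i$, which restricts to a fixed-point-free $S^1$-action on $\Sigma(w_1,w_2,w_3)=V\cap S_\eps$. This realizes $\Sigma(w_1,w_2,w_3)$ as a Seifert fibered space whose base $2$-orbifold is supported on $S^2$.

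Next I would locate the exceptional fibers and record the Seifert invariants. The orbits with nontrivial isotropy lie over the loci on which some of the coordinates vanish; for each index subset $I\subseteq\{1,2,3\}$ the stratum on which exactly the coordinates indexed by $I$ are nonzero contributes a predictable number of exceptional orbits of a predictable multiplicity. A count by inclusion-exclusion shows that these numbers are exactly the quantities $\kappa(I)$, while the orders of the isotropy groups are governed by the $\lcm$ and $\gcd$ data entering $c(I)$. This step converts the geometry into the purely combinatorial bookkeeping of the proposition.

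With the Seifert invariants $\{b;(\alpha_1,\beta_1),\dots,(\alpha_k,\beta_k)\}$ in hand, I would write the standard presentation of $H_1$ of a Seifert manifold over $S^2$,
\[
H_1(\Sigma)\;=\;\Big\langle\, x_1,\dots,x_k,h \;\Big|\; \alpha_j x_j+\beta_j h=0,\ \ \textstyle\sum\nolimits_j x_j+bh=0 \,\Big\rangle,
\]
and then read off the rank and torsion from the Smith normal form of the associated relation matrix. The rank equals the count of the top exceptional contribution, namely $\kappa(\{1,2,3\})$, and the elementary divisors of the torsion subgroup are precisely the $d_j$: the inductively defined numbers $c(I)$ are designed to be the successive $\gcd$-quotients produced during diagonalization, while $\kappa'(I)$ records how many times each one is repeated.

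The hard part will be this final step. In the non-coprime case the $w_i$ share common factors, so there are many exceptional fibers of repeated multiplicity, and one must verify that the inclusion-exclusion invariants $\kappa(I)$, $\kappa'(I)$ and $c(I)$ reproduce the Smith normal form term by term. Controlling this matching --- rather than merely the rank, which is comparatively transparent --- is the delicate heart of Orlik's argument, and it proceeds by induction on $|I|$.
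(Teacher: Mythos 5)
First, a framing point: the paper itself gives \emph{no} proof of this proposition --- it is imported wholesale from Orlik \cite[Propositions 2.6 and 3.4]{Orlik} --- so your sketch can only be compared against the standard argument in the literature. Your skeleton (restrict the $\mathbb{C}^*$-action to a free $S^1$-action on the link, extract Seifert invariants, abelianize the resulting presentation and take Smith normal form) is indeed the route of Orlik--Wagreich, so the strategy is the right one.

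However, there is a genuine error that breaks your argument for the rank statement: the base of the Seifert fibration is \emph{not} $S^2$ in general. The quotient $\Sigma(w_1,w_2,w_3)/S^1$ is the projective curve $\{z_1^{w_1}+z_2^{w_2}+z_3^{w_3}=0\}$ in the weighted projective space $\mathbb{P}(a_1,a_2,a_3)$, and when the exponents are not pairwise coprime its genus $g$ can be positive; in fact $2g=\kappa(\{1,2,3\})$. Concretely, $\Sigma(2,3,6)$ and $\Sigma(3,3,3)$ are circle bundles over elliptic curves and have $H_1$ of rank $2$, matching $\kappa(\{1,2,3\})=2$. Your presentation over $S^2$ has no source of free generators: the Seifert Euler number of the link of a normal surface singularity is nonzero (negative), and a Seifert manifold over $S^2$ with nonzero Euler number has $\mathrm{rank}\,H_1=0$. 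So the step ``the rank equals $\kappa(\{1,2,3\})$, read off from the Smith normal form'' cannot succeed in your setup; the free part comes entirely from the $2g$ base generators in the correct presentation
\[
H_1(\Sigma)\;=\;\Big\langle\, a_1,b_1,\dots,a_g,b_g,\,x_1,\dots,x_k,\,h \;\Big|\; \alpha_j x_j+\beta_j h=0,\ \ \textstyle\sum_j x_j+bh=0 \,\Big\rangle\;\cong\;\Z^{2g}\oplus T,
\]
and identifying $2g$ with $\kappa(\{1,2,3\})$ is itself a nontrivial genus computation that your sketch omits entirely. (It happens that the examples actually used in this paper, $\Sigma(2,k,k)$, $\Sigma(2,15,9)$, $\Sigma(2,15,5)$, all have $\kappa(\{1,2,3\})=0$ and genus-zero base, but the proposition is stated for arbitrary exponents.) A secondary inaccuracy: $\kappa'(I)$ is not literally the number of exceptional fibers over the stratum indexed by $I$; for instance $\Sigma(2,k,k)$ has $k$ exceptional fibers of multiplicity $2$, while $\kappa'(\{2,3\})=k-1$ is the number of $\Z_2$-summands they contribute after abelianization, so the bookkeeping you describe needs this off-by-one correction throughout.
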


\begin{lemma}
\label{comp}
Let $k$ be an odd natural number, then 
\[
	H_{1}(\Sigma(2, k, k); \Z)\cong \left(  \Z_{2}\right)^{k-1}.
\]
\end{lemma}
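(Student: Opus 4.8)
The plan is to apply Orlik's algorithm, recorded in Proposition \ref{rank and torsion}, directly to the weight vector $(w_1, w_2, w_3) = (2, k, k)$. Since the asserted group $(\Z_2)^{k-1}$ is pure torsion, I expect to verify two things: that the rank $\kappa(\{1,2,3\})$ vanishes, and that all $k-1$ of the torsion coefficients $d_j$ equal exactly $2$. The entire argument is a bookkeeping exercise in the quantities $\kappa(I)$, $\kappa'(I)$, $c(I)$ and $d_j$ built in the preceding subsection.

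First I would tabulate the elementary quantities $\prod w_J / \lcm(w_J)$ over all $J \subseteq \{1,2,3\}$. The hypothesis that $k$ is odd enters crucially here: it forces $\gcd(2,k) = 1$, so that $\lcm(2,k) = 2k$ and $\lcm(2,k,k) = 2k$, while $\lcm(k,k) = k$. Consequently the only subsets for which $\prod w_J / \lcm(w_J) \neq 1$ are $J = \{2,3\}$ and $J = \{1,2,3\}$, each of which yields the value $k$. Feeding these into the inclusion–exclusion formula for $\kappa(I)$, a short computation should give $\kappa(\{1,2,3\}) = 0$ (so the rank is zero, as required), together with $\kappa(\emptyset) = 1$, $\kappa(\{2,3\}) = k-1$, and $\kappa(I) = 0$ for every remaining $I$. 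Passing to $\kappa'$, which annihilates the odd-cardinality subsets, leaves only $\kappa'(\emptyset) = 1$ and $\kappa'(\{2,3\}) = k-1$ strictly positive, and one reads off $r = \max_I \kappa(I) = k-1$.

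It then remains to evaluate the multiplicative quantities $c(I)$ recursively. I would compute $c(\emptyset) = 1$, then $c(\{2\}) = c(\{3\}) = \gcd(2,k) = 1$ (using $k$ odd once more), and finally $c(\{2,3\}) = \gcd(\{2\}) / \bigl(c(\emptyset)\,c(\{2\})\,c(\{3\})\bigr) = 2$, where the complementary weight set of $\{2,3\}$ is $\{w_1\} = \{2\}$. Substituting into $d_j = \prod_{I \,|\, \kappa'(I) \geq j > 0} c(I)$ shows that $d_1 = c(\emptyset)\,c(\{2,3\}) = 2$, while $d_j = c(\{2,3\}) = 2$ for each $2 \leq j \leq k-1$, since $\emptyset$ drops out of the product once $j \geq 2$. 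Hence every $d_j$ equals $2$, the torsion subgroup is $\Z_{d_1} \oplus \cdots \oplus \Z_{d_{k-1}} = (\Z_2)^{k-1}$, and combined with the vanishing rank this yields the claim.

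The computation is entirely mechanical, so there is no deep obstacle; the only step demanding genuine care is the bookkeeping in the recursive definition of $c(I)$ — correctly pairing each indexing set $I$ with its \emph{complementary} weights in the numerator $\gcd(\{w_1,w_2,w_3\} - w_I)$, and remembering to restrict the product defining $d_j$ to those $I$ with $\kappa'(I) \geq j$. The oddness of $k$ is the one load-bearing hypothesis: it is what collapses all the coprimality-dependent terms and ultimately pins each torsion coefficient to $2$ rather than to a larger divisor of $2k$.
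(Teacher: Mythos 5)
Your proposal is correct and follows essentially the same route as the paper's own proof: a direct application of Orlik's algorithm (Proposition \ref{rank and torsion}) to $(2,k,k)$, using oddness of $k$ to compute $\kappa'(\emptyset)=1$, $\kappa'(\{2,3\})=k-1$, $\kappa'(I)=0$ otherwise, together with $c(\emptyset)=1$ and $c(\{2,3\})=2$, so that the rank vanishes and $d_j=2$ for all $j=1,\dots,k-1$. If anything, your bookkeeping is slightly more careful than the paper's, which writes $d_j=c(\emptyset)\cdot c(\{2,3\})$ for every $j$ even though $\emptyset$ drops out of the product once $j\ge 2$ (harmlessly, since $c(\emptyset)=1$).
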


\begin{proof}
By Proposition \ref{rank and torsion} the rank of $H_{1}(\Sigma(2, k, k); \Z)$ is zero. A direct computation of $\kappa' (I)$ shows that 
\[
\kappa'(I)=\left\{
    \begin{array}{ll}
    1 & I=\emptyset \\
    k-1 & I=\{2,3\} \\
	0 & \text{ otherwise.}
    \end{array}
  \right.          
\]
The only values of $c(I)$ appearing in the calculation of the $d_j$ are therefore $c(\emptyset)=1$ and $c(\{2,3\})=2$. Thus $r=k-1$ and for all $j=1,\dots, k-1$ we obtain 
\[
	d_j = \prod_{\kappa(I)  \geq j} c(I) = c(\emptyset) \cdot c(\{2,3\}) = 2. \qedhere
\]
\end{proof}
\begin{example} \label{BrieskornSphereSigma(2,15,9)}
Consider the Brieskorn manifold $\Sigma (2,15,9)$. Its rank is easily seen to equal zero. Moreover, $\kappa'(\emptyset) =1$, $\kappa'(\{2,3\}) =2$ and $\kappa'(I)=0$ otherwise. It follows that $r=2$ and $d_j = c(\emptyset)\cdot c(\{2,3\}) = 2$ for all $j=1,2$. We conclude that 
\[
	H_1(\Sigma (2,15,9);\mathbb Z) \cong \left( \mathbb Z_2\right)^2.
\]
\end{example}
\begin{example} \label{BrieskornSphereSigma(2,15,5)}
The rank of $\Sigma (2,15,5)$ is similarly equal zero. Here, $\kappa'(\emptyset) =1$, $\kappa'(\{2,3\}) =4$ and $\kappa'(I)=0$ otherwise. It follows that $r=4$ and $d_j = c(\emptyset)\cdot c(\{2,3\}) = 2$ for all $j=1,2,3,4$. We conclude that 
\[
	H_1(\Sigma (2,15,5);\mathbb Z) \cong \left( \mathbb Z_2\right)^4.
\]

\end{example} 
%%%%%%%%%%%%%%%%%%%%%%%%%%%%%%%%%%%%%%%%%%%%%%%%%%%%%%%%%%%%%
%%%%%%%%%%%%%%%%%%%%%%%%%%%%%%%%%%%%%%%%%%%%%%%%%%%%%%%%%%%%%
\section{Geodesic Triangles in Knot Graphs}
\label{Hn and Concordance triangles}
Recall that a metric space is Gromov hyperbolic if it is $\delta$-hypberbolic for at least one $\delta\ge0$ (cf. Definition \ref{DefinitionOfGromovHyperbolic}). Accordingly, one proves the absence of Gromov hyperbolicity in a metric space by showing that for every $\delta\ge 0$ there exists a geodesic triangle that is not $\delta$-thin. We prove such a statement for the case of the knot graphs $\mathcal K_{H(n)}$ (Proposition \ref{PropositionOnDeltaHyperbolicityOfHnGordianGraphs} in Section \ref{Hn triangles}) and the concordance graph $\mathcal{CK}_{\slashoverback}$ (Proposition \ref{PropositionOnDeltaHyperbolicityOfConcordanceGraphs} in Section \ref{Concordance triangles}). These results are then used in Section \ref{proofs} to prove Theorem \ref{MainTheorem1} (see specifically the proof of Theorem \ref{TheoremAboutNonHyperbolicityResults}). 
%%%%%%%%%%%%%%%%%%%%%%%%%%%%%%
%%%%%%%%%%%%%%%%%%%%%%%%%%%%%%
\subsection{ $H(n)$-Gordian Knot Graphs}
\label{Hn triangles}
Consider any three mutually distinct knots $K_0, K_1, K_2$. Theorem \ref{TheoremHosteNakanishiTaniyama} implies that there exists an $n_0\in \mathbb N$ such that for all $n\ge n_0$ the equality $d_n(K_i,K_j) = 1$ holds for any pair of distinct indicies $i,j \in \{0,1,2\}$. Accordingly, the knots $K_0, K_1, K_2$ are the vertices of a geodesic triangle in $\mathcal K_{H(n)}$ and this triangle is plainly $\delta$-hyperbolic for all $\delta \ge 1/2$. In contrast to this conclusion, we will show that with $n\ge 2$ fixed, and for any $\delta \ge 0$, there exist geodesic triangles in $\mathcal K_{H(n)}$ which are not $\delta$-hyperbolic. We begin with the case of $n=2$. 

For $m\in \mathbb N$, whose value is to be determined later, consider the vertices given by the unknot $U$, the knot $K_{1}=\#^{m}T(2, 9)$ and the knot $K_{2}=\#^{m}T(2, 9)\#(\#^{m}T(2, 15))$  in $\mathcal K_{H(2)}$. Observe that each of $T(2,9)$ and $T(2,15)$ can be unknotted by a single $H(2)$-move, and hence $d_2(U,K_1)\le m$ and $d_2(U, K_2) \le 2m$. Lower bounds on $d_2(U,K_1)$ and $d_2(U, K_2)$ come courtesy of Proposition \ref{PropositionBounds}. Indeed, since $\Sigma (T(2,9))\cong L(2,9)$ and $\Sigma(T(2,15))\cong L(2,15)$, then  
$$H_1(\Sigma(K_1);\mathbb Z) \cong \left(\mathbb Z_9\right)^m \quad \text{ and } \quad  H_1(\Sigma(K_2);\mathbb Z) \cong \left(\mathbb Z_9\right)^m\oplus \left(\mathbb Z_{15}\right)^m .$$
It follows that $e_2(K_1) = m$ and $e_2(K_2)=2m$ and therefore that $d_2(U, K_1)\ge m$ and $d_2(U,K_2)\ge 2m$, implying that $d_2(U,K_1) = m$ and $d_2(U, K_2) = 2m$. This shows that the path $\ell_1$ connecting $U$ to $K_1$ through the edges with vertices $\#^iT(2,9)$, $i=1, \dots, m-1$ is a geodesic path. The same is true of the path $\ell_3$ connecting $U$ to $K_2$ via the edges in $\mathcal K_{H(2)}$ with the vertices 
\begin{align*}
T(2, 15) \leftrightarrow   T(2, 9)\# T(2, 15)\leftrightarrow   T(2, 9)\# (\#^{2} T(2, 15))\leftrightarrow   (\#^{2}T(2, 9)) \#(\#^{2} T(2, 15))\leftrightarrow \cr
\dots \leftrightarrow  (\#^{m-1} T(2, 9))\#(\#^{m-1} T(2, 15))\leftrightarrow   (\#^{m-1} T(2, 9))\#(\#^{m} T(2, 15)).
\end{align*}

By a similar argument, it follows that $d_2(K_{1}, K_{2})=m$, and the path $\ell_2$ connecting $K_{1}$ to $K_{2}$ passing through the knots $(\#^{m} T(2, 9)) \#(\#^{i} T(2, 15))$, $i=1, \dots, m-1$, is a geodesic. 

Next we shall estimate from below the distance of a particular vertex $K_3$ from the edge $\ell_3$, to the union $\ell_1\cup \ell_2$ in the geodesic triangle constructed above. 

\begin{claim}

Assume that $m=2k$ is even. Set $K_3 = (\#^{k} T(2, 9))\#(\#^{k}T(2, 15))$, and observe that $K_3$ is a vertex on the path $\ell_3$. Then 
\[ d_{2}(K_3, \ell_{1}\cup \ell_{2})\geq 3k/4. \]
\end{claim}
\proof We first prove that $d_{2}(K_{3}, \ell_{1})\geq k$. Let $K=\#^iT(2,9)$ be a vertex on the path $\ell_1$, then by Corollary \ref{CorollaryAboutObstructions} we obtain
\[ d_{2}(K_{3}, K)\geq |e^{5}_{2}(K_{3})-e^{5}_{2}(K)| =k. \]
This concludes that $d_2(K_3, \ell_1)\ge k$.

Now we let $K$ denote a knot on the path $\ell_2$ of the form $(\#^{2k} T(2, 9))\#( \#^{i} T(2, 15))$ where $i\in\{0, \cdots , 2k\}$. Consider the $9$-fold   cyclic covers of $S^{3}$ with branching sets  $T(2, 9)$ and $T(2, 15)$, respectively. These are the Brieskorn manfolds $\Sigma(2, 9, 9)$ and $\Sigma(2, 15, 9)$ respectively, and Lemma \ref{comp} and Example \ref{BrieskornSphereSigma(2,15,9)}  imply that 
\[ H_{1}(\Sigma(2, 9, 9); \Z)\cong \left( \Z_{2}\right)^8 \quad \text{ and } \quad H_{1}(\Sigma(2, 15, 9); \Z)\cong \left( \Z_{2} \right)^2. \]
Applying Corollary \ref{CorollaryAboutObstructions} we obtain 
\[ d_{2}(K_{3}, K)\geq \dfrac{|e_{9}(K)-e_{9}(K_{3})|}{8}=\dfrac{6k+2i}{8}\geq \dfrac{3k}{4}. \]
This implies that $d_2(K_3,\ell_2)\ge 3k/4$ and thus that $d_2(K_3,\ell_1\cup \ell_2)\ge 3k/4$. Since $K_3$ lies on $\ell_3$, we conclude that $\ell_3$ is not contained in the closed $\delta$-neighborhood of $\ell_1\cup \ell_2$ whenever $3k/4>\delta$. 
\qed

Given any $\delta \ge 0$, choose $k$ in the above construction so that $3k/4>\delta$, then the geodesic triangle $\{\ell_1, \ell_2, \ell_3\}$ is not $\delta$-thin. It follows that $\mathcal K_{H(2)}$ is not $\delta$-hyperbolic for any $\delta\ge 0$. 

By Corollary \ref{quasigordian}, $\mathcal K_{H(n)}$ is quasi-isometric to $\mathcal K_{H(2)}$ for any $n>2$. 
Corollary \ref{hyperbo} now implies that $\mathcal K_{H(n)}$ is also not $\delta$-hyperbolic for any $\delta\ge 0$. This proves Part (i) of Theorem \ref{TheoremAboutNonHyperbolicityResults}. 

\vskip3mm
We conclude this section with an explicit construction of a geodesic triangle in $\mathcal K_{H(n)}$ that can be used to disprove its $\delta$-hyperbolicity directly. Indeed, the construction of said triangle proceeds in analogy with the case $n=2$ given above. The needed modifications are replacing $T(2, 9)$ by $\#^{n-1} T(2, 9)$ and replacing $T(2, 15)$ by $\#^{n-1} T(2, 15)$. We consider then in $\mathcal K_{H(n)}$ the vertices 
\[
	U, \quad K_1 =\#^{m(n-1)} T(2, 9) \quad \text{ and }\quad K_{2}=(\#^{m(n-1)} T(2, 9)) \# (\#^{m(n-1)} T(2, 15)).
\]
As before, $m\ge 1$ is an integer whose value is to be determined later. Since each of $\#^{n-1}T(2,9)$ and $\#^{n-1}T(2,15)$ can be unknotted by a single $H(n)$-move (as in Figure \ref{Pic7HnMovesOnConnnectedSumsOfTorusKnots}), it follows that $d_n(U,K_1)\le m$ and $d_n(U,K_2) \le 2m$. An application of Proposition \ref{PropositionBounds} shows that $d_n(U,K_1)\ge m$ and $d_n(U,K_2)\ge 2m$, leading to $d_n(U,K_1) = m$ and $d_n(U,K_2)=2m$. It follows that the path $\ell_1$ connecting $U$ to $K_1$ by passing through the vertices $\#^{i(n-1)}T(2,9)$, $i=1,\dots, m-1$ is a geodesic path in $\mathcal K_{H(n)}$. The same is true of the path $\ell_3$ connecting $U$ to $K_2$ by passing through the vertices 
\begin{align*}
\#^{(n-1)}T(2, 15) \leftrightarrow  (\#^{(n-1)}T(2, 9))\# (\#^{(n-1)}T(2, 15))  \cr
\leftrightarrow (\#^{(n-1)} T(2, 9))\# (\#^{2(n-1)} T(2, 15)) \leftrightarrow (\#^{2(n-1)}T(2, 9)) \#(\#^{2(n-1)} T(2, 15)) \cr
\leftrightarrow  \dots \leftrightarrow (\#^{(m-1)(n-1)} T(2, 9))\#(\#^{(m-1)(n-1)} T(2, 15)) \cr
\leftrightarrow (\#^{(m-1)(n-1)} T(2, 9))\#(\#^{m(n-1)} T(2, 15)).
\end{align*}
Lastly let $\ell_2$ be the path connecting $K_1$ to $K_2$ via edges in $\mathcal K_{H(n)}$ with intermediate vertices $(\#^{m(n-1)}T(2,9)) \#(\#^{i(n-1)}T(2,15))$, $i=1,\dots, m-1$. Clearly $d_n(K_1, K_2)\le m$ (to see this use $m$ $H(n)$-moves to unknot the $m$ summands of $\#^{n-1}T(2,15)$ in $K_2$) while an application of Proposition \ref{PropositionBounds} gives $d_n(K_1, K_2) \ge m$. This shows that $d_n(K_1, K_2) = m$ and that $\ell_2$ is a geodesic path.

As before, let $m=2k$  with $k$ to be chosen later, and let 
\[ K_3 =(\#^{k(n-1)}T(2,9))\#(\#^{k(n-1)}T(2,15)). \]
Note that $K_3$ is a vertex on the path $\ell_3$, and as before we obtain the inequality  
\[ d_{n} (K_3, \ell_{1}\cup \ell_{2}) \geq 3k/4. \]
If $K=\#^{i(n-1)}T(2,9)$ is any vertex on the path $\ell_1$, then Proposition \ref{PropositionBounds} implies 
\[
	d_{n} (K_{3}, K)\geq \frac{|e_{5}(K_{3})-e_{5}(K)|}{ 4(n-1)}  = \frac{|4(n-1)k - 0| }{4(n-1)} = k.
\]
The above calculation use the facts that $\Sigma_5(T(2,9) \cong \Sigma (2,9,5)$ which is an integral homology sphere, and $\Sigma _5(T(2,15))\cong \Sigma (2,15,5)$. According to Example \ref{BrieskornSphereSigma(2,15,5)} we obtain $H_1(\Sigma (2,15,5);\mathbb Z) \cong (\mathbb Z_2)^4$. 
Thus $d_{n}(K_{3}, \ell_{1})\geq k$. 

Next, we consider $d_{n}(K_{3}, \ell_{2})$. Let $K$ be a vertex on the path $\ell_2$. Then  $K$ is of the form $(\#^{2k(n-1)} T(2, 9)) \#(\#^{i(n-1)}T(2, 15))$ where $0\leq i\leq 2k$. Consider again the 9-fold cyclic covers of $S^{3}$ with the branching sets $T(2, 9)$ and $T(2, 15)$ as before. Then 
\[
	d_{n}(K_{3}, K) \geq \dfrac{|e_{9}(K)-e_{9}(K_{3})|}{8(n-1)}\geq \dfrac{6(n-1)k+2i(n-1)}{8(n-1)}\geq \dfrac{3k}{4}.
\]
Given any $\delta \ge 0$, pick $k$ as large enough so that $3k/4>\delta$, then we have constructed a geodesic triangle $\{\ell_1, \ell_2, \ell_3\}$ that is not $\delta$-thin. In summary, we proved:

\begin{proposition} \label{PropositionOnDeltaHyperbolicityOfHnGordianGraphs}
For every $n\ge 2$ and every $\delta\ge 0$ there exists a geodesic triangle in the knot graph $\mathcal K_{H(n)}$ that is not $\delta$-thin. Accordingly, $\mathcal K_{H(n)}$ is not $\delta$-hyperbolic for any $\delta \ge 0$. 	
\end{proposition}

%%%%%%%%%%%%%%%%%%%%%%%%%%%%%%%%%%%%%%%%%%%%%%%%%%%%%%%%%%%%%
\subsection{Concordance Knot Graphs}
\label{Concordance triangles}

For simplicity of notation let $s'(K) = \frac{1}{2}s(K)$, where $s(K)$ is the Rasmussen invariant of the knot $K$ \cite{Ras}. We shall still refer to $s'$ itself as the Rasmussen invariant. Let $\tau (K)$ denote the Ozsv\'ath-Szab\'o concordance tau invariant \cite{OZS1}. Then  the distance function $d$ on $\mathcal{CK}_{\slashoverback}$ satisfies the lower bounds 
\[
	d([K],[K']) \ge |\tau (K) - \tau (K')|\quad \text{ and } \quad d([K],[K']) \ge |s' (K) - s' (K')|,
\]
whenever the knot $K$ and $K'$ have diagrams that differ by a single crossing change \cite{OZS1}, \cite{Ras}. 

A result of Hedden--Ording \cite{HeddenOrding} stipulates that the knot $K_{0,1}:=D_+(T(2,3),2)$ (the 2-twisted positive Whitehead double of the right-handed trefoil knot $T(2,3)$) has Ozsv\'ath-Szab\'o and Rasmussen invariants given by 
\[ \tau(K_{0,1}) = 0 \quad \text{ and } \quad s'(K_{0,1}) = 1. \]
Since all Whitehead doubles of nontrivial knots have unknotting number equal to 1, we obtain $u(K_{0,1})=1$. Let $K_{1,1} = -T(2,3)$ and observe that 
\[ \tau(K_{1,1}) = -1, \quad s'(K_{1,1}) = 1, \quad \text{ and } \quad u(K_{1,1}) = 1. \]
Pick an integer $k\in \mathbb N$, and form a triangle with edges $\ell_1$, $\ell_2$, $\ell_3$ constructed as follows: 
\begin{itemize}
\item The edge $\ell _1$ connects the class of the class of the unknot $[U]$ to the class of the knot $[\#^{k} K_{0,1}]$ with intermediate vertices given by $[\#^m K_{0,1}]$, $m=1, ,\dots, k-1$. Let $L:=\#^kK_{0,1}$. 
\item The edge $\ell_2$ connects $[L]=[\#^{k} K_{0,1}]$ to $[L\#(\#^{k}K_{1,1})]$ with intermediate vertices given by $[L\#(\#^mK_{1,1})]$, $m=1, ,\dots, k-1$.
\item The edge $\ell_3$ connects $[U]$ to $[L\#(\#^{k}K_{1,1})]$ with intermediate vertices given by 
\[
\begin{array}{c}
 [U ]\leftrightarrow [K_{1,1}]\leftrightarrow[\#^2K_{1,1}]\leftrightarrow[\#^3K_{1,1}]\leftrightarrow\dots \leftrightarrow [\#^kK_{1,1}]\leftrightarrow \cr \cr
[\#^kK_{1,1}]\leftrightarrow[K_{0,1}\#(\#^kK_{1,1})]\leftrightarrow[(\#^2K_{0,1})\#(\#^kK_{1,1})]\leftrightarrow\dots \leftrightarrow [(\#^kK_{0,1})\#(\#^kK_{1,1})].
\end{array}
\]
\end{itemize}
We first show that all three edges are geodesic paths in $\mathcal{CK}_{\slashoverback}$. 

Pick a pair of vertices $[\#^mK_{0,1}]$ and $[\#^n K_{0,1}]$ in $\ell_1$, with $m,n\in \{0,\dots, k\}$. Then $\#^mK_{0,1}$ and $\#^n K_{0,1}$ are related by $|m-n|$ crossing changes, showing that $d([\#^mK_{0,1}], [\#^n K_{0,1}])\le |m-n|$. On the other hand
\[ d([\#^mK_{0,1}], [\#^n K_{0,1}]) \ge |s'(\#^mK_{0,1})-s'(\#^nK_{0,1})| = |m-n|, \]
showing that $d([\#^mK_{0,1}], [\#^n K_{0,1}]) = |m-n|$ and thus that $\ell_1$ is a geodesic edge. 

Similarly, pick a pair of vertices  $[L\#(\#^mK_{1,1})]$ and $[L\#(\#^n K_{1,1})]$ in $\ell_2$, with $m,n\in \{0,\dots, k\}$. Then $L\#(\#^mK_{1,1})$ and $L\#(\#^n K_{1,1})$ are related by $|m-n|$ crossing changes, showing that 
\[ d([L\#(\#^mK_{1,1})], [L\#(\#^n K_{1,1})])\le |m-n|.\] 
On the other hand
\[
	d([L\#(\#^mK_{1,1})], [L\#(\#^n K_{1,1})]) \ge |\tau(L\#(\#^mK_{1,1}))-\tau(L\#(\#^nK_{1,1}))| = |m-n|,
\]
showing that $d([L\#(\#^mK_{1,1})], [L\#(\#^n K_{1,1})])  = |m-n|$ and thus that $\ell_2$ is a geodesic edge. 

Lastly, consider two vertices from $\ell_3$. Since the value of $s'$ is increasing by exactly 1 as we pass from the starting vertex $[U]$ of $\ell_3$ towards the final vertex $[L\#(\#^kK_{1,1})]$ of $\ell_3$, and since every pair of neighboring vertices in $\ell_3$ are related by a crossing change, a similar argument applies here too, showing $\ell_1\cup\ell_2\cup \ell_3$ to form a geodesic triangle. 

Finally, consider the \lq\lq midpoint\rq\rq vertex $M=[\#^k K_{1,1}]$ on $\ell_3$. By direct computation we find that
\begin{align*}
d[M,\ell_1] & = \min_{0\le m\le k} d[M,[\#^mK_{0,1}]) \ge \min_{0\le m\le k} |\tau (\#^k K_{1,1})-\tau(\#^mK_{0,1})| = k, \cr  
d[M,\ell_2] & = \min_{0\le m\le k} d[M,[L\#(\#^mK_{1,1})]) \ge \min_{0\le m\le k} |\tau (\#^k K_{1,1})-\tau(L\#(\#^mK_{1,1}))| =|k-m|,\cr
d[M,\ell_2] & = \min_{0\le m\le k} d[M,[L\#(\#^mK_{1,1})]) \ge \min_{0\le m\le k} |s' (\#^k K_{1,1})-s'(L\#(\#^mK_{1,1}))| =m.
\end{align*}
Since $m$ ranges from $0$ to $k$, we find that 
\[
	d(M,\ell_1) \ge k \quad \text{ and } \quad d(M,\ell_2)\ge k/2.
\]
Given any $\delta\ge 0$ and picking $k>2\delta$ shows that the geodesic triangle $\ell_1\cup\ell_2\cup \ell_3$ in $\mathcal{CK}_{\slashoverback}$ is not $\delta$-thin. We summarize our finding in the next proposition.

\begin{proposition}\label{PropositionOnDeltaHyperbolicityOfConcordanceGraphs}
For every $\delta\ge 0$ there exists a geodesic triangle in the concordance knot graph $\mathcal{CK}_{\slashoverback}$ that is not $\delta$-thin. Accordingly, $\mathcal{CK}_{\slashoverback}$ is not $\delta$-hyperbolic for any $\delta \ge 0$. 	
\end{proposition}
This proves the portion of Theorem \ref{MainTheorem1} concerning concordance Gordian graphs, and thus completes the proof of said theorem. 

\begin{remark}
We were not able to prove that the concordance knot graphs $\mathcal{CK}_{H(n)}$ are not $\delta$-hyperbolic, even for the base case of $n=2$. This is chiefly because we do not know of two  ``independent" lower bounds on the metric $d$ on  $\mathcal{CK}_{H(n)}$, akin to the roles played by $\tau$ and $s'$ for the case of $\mathcal{CK}_{\slashoverback}$. 
\end{remark}

%%%%%%%%%%%%%%%%%%%%%%%%%%%%%%%%%%%%%%%%%%%%%%%%%%%%%%%%%%%%%
%%%%%%%%%%%%%%%%%%%%%%%%%%%%%%%%%%%%%%%%%%%%%%%%%%%%%%%%%%%%%
\section{Quotient Knot Graphs}
\label{quotient triangles}

This section is devoted to the study of quotient knots graphs as introduced in Section \ref{SectionIntroduction}  and more precisely defined in Section \ref{knot graphs} (cf. Definition \ref{DefinitionOfKnotGraphs}). We study two types of quotient knot graphs, those resulting from the use of a single unknotting operation and a single knot invariant (Section \ref{quotients 1}),  and those obtained from a single unknotting operation in combination with two knot invariants (Section \ref{quotients 2}). Examples \ref{CrossingAndUnkottingNumber}--\ref{ExampleCCAndS/2} provide the isometry types for the quotient graphs $Q\mathcal K_{\slashoverback}^{u}$, $Q\mathcal K_{H(2)}^{\gamma_4}$, $Q\mathcal K_{\slashoverback}^{u}$, $Q\mathcal K_{\slashoverback}^\tau$ and  $Q\mathcal K_{\slashoverback}^{s/2}$ respectively.  Example \ref{isometry} determines the isometry class of $Q\mathcal K^{\{g_4, u\}}_{\slashoverback}$. Each of these examples is shown to meet the hypotheses of two general results about quotient graphs, Theorems \ref{IsometryBetweenQuotinetKnotGraphAndIntegers} and \ref{IsometryBetweenQuotinetKnotGraphAndIntegersCaseTwo}. 

%%%%
%%%%
\subsection{Quotients with respect to a single unknotting operation and knot invariant}
\label{quotients 1}
%%%
%%%
Let $\mathcal O$ be an unknotting operation on knot diagrams, and let $\mathcal I$ be an integer-valued knot invariant compatible with $\mathcal O$ (Definition \ref{DefinitionOfCompatibility}). 
In the next theorem let $|\cdot|$ denote the Euclidean norm on $\mathbb R$, as well as its restrictions to various subsets of $\mathbb R$.  

\begin{theorem} \label{IsometryBetweenQuotinetKnotGraphAndIntegers}
Let $\mathcal I$ be an integer-valued knot invariant with image $\mathbb N$, $\mathbb N\cup\{0\}$ or $\mathbb Z$, and let $\mathcal O$ be an unknotting operation compatible with $\mathcal I$. Let $Q\mathcal K_\mathcal O^\mathcal I$ be the associated quotient knot graph and let $d$ denote its metric. If for every $n\in \text{Im}(\mathcal I)$ there exists a knot $K_n$ with $\mathcal I(K_n) = n$, and where $K_n$ and $K_{n+1}$ are related by an $\mathcal O$-move or its inverse, then the function 
\[ \mathcal I : (Q\mathcal K_{\mathcal O}^{\mathcal I}, d) \to (\text{Im}(\mathcal I) , |\cdot|) \]
is an isometry.  
\end{theorem}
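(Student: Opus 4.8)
The plan is to prove that the invariant map $\mathcal I$ is an isometry by establishing two inequalities, namely that $\mathcal I$ is distance non-increasing and distance non-decreasing between the quotient knot graph and the image equipped with the Euclidean metric. Since $\mathcal I$ descends to a well-defined map on equivalence classes (two knots in the same class have equal $\mathcal I$-value by the very definition of the equivalence relation), and since it is surjective onto $\text{Im}(\mathcal I)$ by hypothesis, the only content is that it preserves distances.

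First I would show that $|\mathcal I([K]) - \mathcal I([K'])| \le d([K],[K'])$, i.e.\ that $\mathcal I$ is $1$-Lipschitz. This is the compatibility direction. If $d([K],[K']) = N$, then there is an edge-path of length $N$ in $Q\mathcal K_{\mathcal O}^{\mathcal I}$ joining the two classes; each edge is witnessed by a pair of representative knots related by a single $\mathcal O$-move (or its inverse). By the compatibility of $\mathcal O$ with $\mathcal I$ (Definition \ref{DefinitionOfCompatibility}), the $\mathcal I$-value changes by at most $1$ across each such move, and since $\mathcal I$ is constant on equivalence classes, the total change in $\mathcal I$ along the path is at most $N$. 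A telescoping/triangle-inequality estimate on $\mathbb R$ then yields $|\mathcal I([K]) - \mathcal I([K'])| \le N = d([K],[K'])$.

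Next I would show the reverse inequality $d([K],[K']) \le |\mathcal I([K]) - \mathcal I([K'])|$, which is where the hypothesis on the existence of the family $\{K_n\}$ is used. Writing $a = \mathcal I([K])$ and $b = \mathcal I([K'])$ with, say, $a \le b$, I use the knots $K_a, K_{a+1}, \dots, K_b$ supplied by the hypothesis. Consecutive knots $K_n$ and $K_{n+1}$ are related by an $\mathcal O$-move, so their classes span an edge, giving $d([K_n],[K_{n+1}]) \le 1$; moreover $\mathcal I(K_n) = n$ means these classes are all distinct and form a path of length $b-a$ from $[K_a]$ to $[K_b]$. Because $[K_a] = [K]$ and $[K_b] = [K']$ (equality of $\mathcal I$-values forces equality of classes as $\mathbb I$ here is the single invariant $\mathcal I$), this exhibits a path realizing distance at most $b - a = |\mathcal I([K]) - \mathcal I([K'])|$.

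Combining the two inequalities gives $d([K],[K']) = |\mathcal I([K]) - \mathcal I([K'])|$ on vertices, which together with surjectivity establishes the isometry. I expect the main obstacle to be a bookkeeping subtlety rather than a deep one: one must be careful that the metric $d$ on the graph is defined via edges witnessed by representatives that may differ from $K$ and $K'$ themselves, so the $1$-Lipschitz argument must track that $\mathcal I$ is genuinely constant across the equivalence relation and that each graph edge corresponds to a single $\mathcal O$-move between \emph{some} representatives. The reverse inequality is the cleaner half once the explicit path $K_a \to \dots \to K_b$ is in hand; the only thing to verify there is that this path indeed connects the correct endpoint classes, which follows immediately from the single-invariant quotient collapsing all knots of equal $\mathcal I$-value to one vertex.
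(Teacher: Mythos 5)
Your proposal is correct and follows essentially the same route as the paper's proof: the $1$-Lipschitz bound via compatibility and a telescoping sum $|\eps_1+\dots+\eps_n|\le n$ along a shortest edge-path, and the reverse bound by exhibiting the explicit path $[K_a]\to[K_{a+1}]\to\dots\to[K_b]$ through the hypothesized family, using that classes in this single-invariant quotient are exactly the level sets of $\mathcal I$. If anything, your handling of the bookkeeping point—that consecutive edges of a path are witnessed by possibly different representatives, which is harmless because $\mathcal I$ is constant on classes—is stated more explicitly than in the paper, which tacitly lifts a geodesic to a single chain of knots.
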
 
%%%
%%%
\begin{proof}
Recall that for a knot $K$ the equivalence class $[K]_\mathcal O^\mathcal I$ consists of all knot $K'$ with $\mathcal I (K') = \mathcal I(K)$. If $[K]_\mathcal O^\mathcal I \ne  [K']_\mathcal O^\mathcal I$ are any two vertices in $Q\mathcal K_\mathcal O^\mathcal I$ with $d([K]_\mathcal O^\mathcal I, [K']_\mathcal O^\mathcal I) = n \ge 1$, let $K_0, \dots, K_n$ be knots such that a single $\mathcal O$-move or its inverse relates $K_i$ to $K_{i+1}$, and such that $K_0\in [K]_\mathcal O^\mathcal I$ and $K_n\in [K']_\mathcal O^\mathcal I$. Write $\mathcal I(K_{i+1}) = \mathcal I (K_i) + \eps_{i+1}$ for some choice of $\eps_{i+1}\in \{-1, 0, 1\}$ (which is possible since $\mathcal O$ and $\mathcal I$ have been assumed to be compatible). Then 
\[ |\mathcal I(K') - \mathcal I(K)| = |\mathcal I(K_n) - \mathcal I(K_0)| = |\eps_1+\dots+\eps_n| \le n = d([K']_\mathcal O^\mathcal I,[K]_\mathcal O^\mathcal I). \]
It follows that 
\[ |n-m| = |\mathcal I (K_n) - \mathcal I (K_m)| \le d([K_n]_\mathcal O^\mathcal I,[K_m]_\mathcal O^\mathcal I). \]
On the other hand, since $K_n\in [K_n]_\mathcal O^\mathcal I$ and $K_m\in [K_m]_\mathcal O^\mathcal I$ and $K_n$ and $K_m$ differ by at most $|n-m|$ $\mathcal O$-moves and/or their inverses, it follows that $d([K_n]_\mathcal O^\mathcal I,[K_m]_\mathcal O^\mathcal I) \le |n-m|$ and hence 
\[ d([K_n]_\mathcal O^\mathcal I,[K_m]_\mathcal O^\mathcal I) = |n-m|, \]
completing the proof of the theorem. 
\end{proof}
%%%
%%%
\begin{example} \label{CrossingAndUnkottingNumber}
In this example take $\mathcal O$ to be a crossing change operation, and take $\mathcal I$ to be the unknotting number $u$, and notice that these are compatible in the sense of Definition \ref{DefinitionOfCompatibility}. For $n\in \mathbb N\cup \{0\} = \text{Im}(\mathcal I)$, let $K_n = T(2n+1,2)$. Then $u(K_n) = n$ and $K_{n+1}$ can be changed into the knot $K_n$ by a single crossing change.  Thus, by  Theorem \ref{IsometryBetweenQuotinetKnotGraphAndIntegers}, there is an isometry between $Q\mathcal K_{\slashoverback}^{u}$ and $\mathbb N\cup\{0\}$.
\end{example}

\begin{example} \label{ExampleH(2)AndGamma4}
Take $\mathcal O$ to be a noncoherent band move (an $H(2)$ move), take $\mathcal I$ to be $\gamma_4$ (the non-orientable smooth 4-genus) and for $n\in \mathbb N = \text{Im}(\mathcal I)$, let $K_n=T(2n+2,2n+1)$. Then by \cite{Batson}, the knots $K_n$ satisfy the assumptions of Theorem \ref{IsometryBetweenQuotinetKnotGraphAndIntegers}, leading to the isometry $Q\mathcal K_{H(2)}^{\gamma_4}  \cong \mathbb N$.
\end{example} 

\begin{example} \label{ExampleCCAndG4}
Taking $\mathcal O$ to be a crossing change operation $\slashoverback$, taking $\mathcal I = g_4$ and for $n\in \mathbb N\cup \{0\}=\text{Im}(g_4)$ letting $K_n=T(2n+1,2)$ satisfies the assumptions of Theorem \ref{IsometryBetweenQuotinetKnotGraphAndIntegers}, giving the isometry $Q\mathcal K_{\slashoverback}^{g_4}\cong \mathbb N\cup \{0\}$. 
\end{example} 

\begin{example}  \label{ExampleCCAndTau}
Taking $\mathcal O$ to be a crossing change operation $\slashoverback$, taking $\mathcal I  = \tau$ (the Ozsv\'ath-Szab\'o tau invariant \cite{OZS1}) and for $n\in \mathbb Z=\text{Im}(\tau)$ letting 

\[ K_n=\left\{
\begin{array}{cl}
T(2n+1,2) & , \quad n\ge 0,  \cr
T(2n-1,2) & , \quad n< 0,
\end{array}
\right.\]

satisfies the assumptions of Theorem \ref{IsometryBetweenQuotinetKnotGraphAndIntegers}. Thus $Q\mathcal K_{\slashoverback}^\tau$ is isometric to $\mathbb Z$.
\end{example} 

\begin{example}  \label{ExampleCCAndS/2}
Taking $\mathcal O$ to be a crossing change operation $\slashoverback$, letting $\mathcal I = s'$ (half of the Rasmussen $s$ invariant \cite{Ras}) and for $n\in \mathbb Z = \text{Im}(s')$ letting 
 
\[ K_n=\left\{
\begin{array}{cl}
T(-2n-1,2) & , \quad n\ge 0,  \cr
T(-2n+1,2) & , \quad n< 0,
\end{array}
\right. \]

satisfies the assumptions of Theorem \ref{IsometryBetweenQuotinetKnotGraphAndIntegers}, proving that the spaces $Q\mathcal K_{\slashoverback}^{s/2}$ and $\mathbb Z$ are isometric. 
\end{example} 

\begin{example}  \label{ExampleForNonCompatibleCase}
This example illustrates that when picking an unknotting operation $\mathcal O$ and knot invariant $\mathcal I$ that are not compatible in the sense of Definition \ref{DefinitionOfCompatibility}, the resulting metric space $Q\mathcal K_{\mathcal O}^{\mathcal I}$ can be very different from what is asserted in Theorem \ref{IsometryBetweenQuotinetKnotGraphAndIntegers}. 

Take $\mathcal O$ to be the operation of noncoherent band moves ($H(2)$ moves) and let $\mathcal I = g_4$. Note that the values of $g_4$ of a pair of knots differing by a single $H(2)$-move may differ by an arbitrarily large amount. The vertices of $Q\mathcal K_{H(2)}^{g_4}$ can be identified with $\mathbb N \cup \{0\} = \text{Im}(g_4)$, under the correspondence $n\mapsto [T(2n+1,2)]_{H(2)}^{g_4}$. For each $n\in \mathbb N$ a single band move renders $T(2n+1,2)$ unknotted, showing that 
\[ d(n,0) = 1, \quad \forall n\in \mathbb N, \]
where $d$ is the induced metric on $Q\mathcal{K}_{H(2)}^{g_{4}}$.

In particular, $d(n,m) \le 2$ for all $n,m \in \mathbb N\cup \{0\}$. There is a band move that transforms $T(2n+1,2)$ into $T(2n-3,2)$ (see for example \cite[Figure 2]{LMV}) showing additionally that 
\[ d(n,m) = 1, \quad \text{ if } |n - m| = 4. \]
These relations don't fully pin down the metric space  $Q\mathcal K_{H(2)}^{g_4}$ but they show that it is not isometric to a subspace of $\mathbb R$. 
\end{example}

%%%%%%%%%%%%%%%%%%%%%%%%%%%%%%%%%%%%%%%%%%%%%%%%%%%%%%%%%%%%%%%%%%%%%%%%%%%%%

\subsection{Quotients with respect to a single operation and two knot invariants}
\label{quotients 2}

 Let $||\cdot||_1$ and $||\cdot ||_\infty$ denote the $\ell_1$- and $\ell_\infty$-norms on $\mathbb R^2$, as well as their restrictions to various subsets of $\mathbb R^2$. 

\begin{theorem} \label{IsometryBetweenQuotinetKnotGraphAndIntegersCaseTwo}
Let $\mathcal O$ be an unknotting operation and let $\mathcal I_1, \mathcal I_2$ be two integer-valued knot invariants compatible with $\mathcal O$. Assume that if  $(m_1,n_1)$ and $(m_2,n_2)$ both lie in the image of $\mathcal I_1 \times \mathcal I_2$, then either 

\begin{equation} \label{EquationConvexnessConitionForTwoInvariants}
\cup_{m, n }  \{(m_1,n), (m,n_2\} \subset  \text{Im}(\mathcal I_1 \times \mathcal I_2) \quad \text{ or }  \quad \cup_{m, n }  \{(m,n_1), (m_2,n\} \subset  \text{Im}(\mathcal I_1 \times \mathcal I_2), 
\end{equation}

with both unions taken over integers $m$ between $m_1$ and $m_2$, and integers $n$ between $n_1$ and $n_2$. Let $\mathbb I = \{\mathcal I_1, \mathcal I_2\}$ and let $Q\mathcal K_\mathcal O^\mathbb I$ be the associated quotient knot graph with metric $d$. Suppose there exists a family $\{K_{m,n} \, |\, (m,n) \in \text{Im}(\mathcal I_1 \times \mathcal I_2)\}$ of distinct knots $K_{m,n}$ such that $\mathcal I_1 (K_{m,n}) = m$, $\mathcal I_2(K_{m,n}) = n$, and such that  

\begin{itemize}
\item $K_{m,n}$ and $K_{m,n+1}$ are related by an $\mathcal O$-move or its inverse, whenever $(m,n)$ and $(m,n+1)$ both lie in the image of $\mathcal I_1\times \mathcal I_2$, and
\vskip1mm
\item $K_{m,n}$ and $K_{m+1,n}$ are related by an $\mathcal O$-move or its inverse, whenever $(m,n)$ and $(m+1,n)$ both lie in the image of $\mathcal I_1\times \mathcal I_2$. 
\end{itemize}  

Then the function 
\[ \mathcal I_1\times \mathcal I_2  : (Q\mathcal K_{\mathcal O}^\mathbb I, d) \to (\text{Im}(\mathcal I_1\times \mathcal I_2) , |\cdot|_1) \]
is bi-Lipschitz and satisfies the inequality 
\[ ||(m_1,n_1), \, (m_2,n_2)||_\infty \le d([K_{m_1,n_1}],[K_{m_2,n_2}])  \le ||(m_1,n_1), \, (m_2,n_2)||_1. \]
\end{theorem} 

\begin{proof}
Given a knot $K$, for simplicity of notation we shall write $[K]$ to mean the equivalence class $[K]_\mathcal O^\mathbb I$. The compatibility assumption between $\mathcal O$ and $\mathbb I$ implies that 
\[ d([K],[K']) \ge |\mathcal I_j (K) - \mathcal I_j (K')|, \]
for $j=1,2$ and for any pair of knots $K, K'$ related by an $\mathcal O$-move or its inverse. From these, in complete analogy with the proof of Theorem \ref{IsometryBetweenQuotinetKnotGraphAndIntegers} (while relying on assumption \eqref{EquationConvexnessConitionForTwoInvariants}), one obtains  
\[ d([K_{m,n_1}], [K_{m,n_2}]) = |n_1-n_2| \quad \text{ and } \quad  d([K_{m_1,n}], [K_{m_2,n}]) = |m_1-m_2|, \]
whenever $(m,n_1), (m,n_2), (m_1,n),(m_2,n)$ lie in the image of $\mathcal I_1 \times \mathcal I_2$.  These two equalities show that there are no $\mathcal O$-moves between knots $K_{m,n_1}$ and $K_{m,n_2}$ if $|n_1-n_2|\ge 2$, and similarly there are no $\mathcal O$-moves between knots $K_{m_1,n}$ and $K_{m_2,n}$ if $|m_1-m_2|\ge 2$.

Suppose there is an $\mathcal O$-move from $K_{m_1,n_1}$ to $K_{m_2,n_2}$ for $m_1\ne m_2$ and $n_1\ne n_2$. Then 
\begin{align*}
1 & = d([K_{m_1,n_1}],[K_{m_2,n_2}]) \ge |\mathcal{I}_{1} (K_{m_1,n_1}) - \mathcal{I}_{1} (K_{m_2,n_2})| = |m_1-m_2|, \cr
1 & = d([K_{m_1,n_1}],[K_{m_2,n_2}]) \ge |\mathcal{I}_{2} (K_{m_1,n_1}) - \mathcal{I}_{2} (K_{m_2,n_2})| = |n_1-n_2|.
\end{align*}
We find that the only such $\mathcal O$-moves possible are the ones connecting a knot $K_{m,n}$ to the knots $K_{m\pm 1, n\pm 1}$ (with both signs chosen arbitrarily). Observe then that the distance $d([K_{m_1,n_1}],[K_{m_2,n_2}])$ is minimized when all possible $\mathcal O$-moves of these types exist. Thus, a lower bound on 
\[ d([K_{m_1,n_1}],[K_{m_2,n_2}]) \] 
is given by 
\[ |(m_1,n_1), (m_2,n_2)|_\infty  = \max\{|m_1-m_2|, \, |n_1-n_2|\} \le d([K_{m_1,n_1}],[K_{m_2,n_2}]). \]
On the other hand, for an arbitrary pair $(m_1,n_1), (m_2,n_2)\in \text{Im}(\mathcal I_1 \times \mathcal I_2)$, it is clear that 
\[ d([K_{m_1,n_1}],[K_{m_2,n_2}]) \le |n_1-n_2| + |m_1-m_2| = ||(m_1,n_1), (m_2, n_1)||_1. \]
This claim relies on assumption \eqref{EquationConvexnessConitionForTwoInvariants}. Indeed, if for instance $\cup_{m, n }  \{(m_1,n), (m,n_2)\} \subset  \text{Im}(\mathcal I_1 \times \mathcal I_2)$ (with $m$ between $m_1$ and $m_2$ and $n$ between $n_1$ and $n_2$), then there are $|n_1-n_2|$ $\mathcal O$-moves that connect $K_{m_1,n_1}$ to $K_{m_1,n_2}$, and a further $|m_1-m_2|$ $\mathcal O$-moves that connect the latter knot to $K_{m_2,n_2}$. A similar argument applies in the case that  $\cup_{m, n }  \{(m,n_1), (m_2,n\} \subset  \text{Im}(\mathcal I_1 \times \mathcal I_2)$.
\end{proof}

\begin{example}
\label{isometry}
\textbf{Isometry type of $Q\mathcal K^{\{g_4, u\}}_{\slashoverback}$.}

Consider $\mathcal O$ to be the crossing change operation $\slashoverback$, and let $\mathcal I_1 = g_4$ and $\mathcal I_2= u$ (where $u(K)$ is the unknotting number of the knot $K$). Since for any knot $K$ one has the bound $g_4(K) \le u(K)$, it follows that the image of $g_4\times u$ is a subset of the second octant of $\mathbb Z^2$. As we shall see, the image is actually equal to said octant. 

Let $K_{0,1}$ and $K_{1,1}$ be the knots
\begin{align*}
K_{0,1} & = 6_1 = \text{Stevedors knot,}\cr
K_{1,1} & = 3_1 = \text{Trefoil knot}.
\end{align*} 
It is well known and easy to verify that $g_4(K_{0,1}) = 0$, $u(K_{0,1})=1$, $g_4(K_{1,1})=1=u(K_{1,1})$. For integers $0\le m\le n$ define $K_{m,n}$ as 
\[ K_{m,n} = (\#^{n-m}K_{0,1}) \# (\#^m K_{1,1}). \]
In the above $\#^0K$ denotes the unknot. Observe that 
\begin{align*}
g_4(K_{m,n}) & = g_4((\#^{n-m}K_{0,1}) \# (\#^m K_{1,1})) \cr
& \le (n-m)g_4(K_{0,1})+mg_4(K_{1,1})\cr 
& = m, 
\end{align*}
and
\begin{align*}
u(K_{m,n}) & = u((\#^{n-m}K_{0,1}) \# (\#^m K_{1,1})) \cr
& \le (n-m)u(K_{0,1})+mu_4(K_{1,1})\cr 
& = n.
\end{align*}
Since 
\[ \tau(K_{m,n}) =  (n-m)\tau (K_{0,1})+m \tau (K_{1,1})  = m \]
and since $|\tau(K)|\le g_4(K)$ for any knot $K$, it follows that $g_4(K_{m,n}) = m$.  Recall that there is a lower bound for the unknotting number given by the minimal number of generators of $H_1(\Sigma(K);\mathbb Z)$ (see \cite[page 690]{Wendt} or \cite{Nakanishi}). That is, $$e_2(K)\le u(K).$$ 
For $K_{m,n}$ one finds that 
\begin{align*}
H_1(\Sigma (K_{m,n});\mathbb Z)  &= \left( \oplus _{i=1}^{n-m} H_1(L(9,7);\mathbb Z) \right) \oplus \left(\oplus _{j=1}^m H_1(L(3,1);\mathbb Z) \right) \\
& = (\mathbb Z_9)^{n-m} \oplus (\mathbb Z_3)^m. 
\end{align*}
The minimal number of generators for this homology group is $n$, implying that $u(K_{m,n}) = n$, and in particular that 
\[
\text{Im}(g_4\times u) = \{(m,n)\in \mathbb Z^2\,|\, 0\le m \le n\}.
\]
This shows that condition \eqref{EquationConvexnessConitionForTwoInvariants} from Theorem \ref{IsometryBetweenQuotinetKnotGraphAndIntegersCaseTwo} applies in the current setting. 

Lastly, the knots $K_{m,n+1} =  (\#^{n+1-m}K_{0,1}) \# (\#^m K_{1,1})$ and $K_{m,n}=(\#^{n-m}K_{0,1}) \# (\#^m K_{1,1})$ are related by a crossing change that unknots one of the $K_{0,1}$ summands of $K_{m,n+1}$. 
Similarly $K_{m+1,n}$ is related to $K_{m,n}$ via the crossing change that unknots one of the $K_{1,1}$ summands of $K_{m+1,n}$.  

It follows then from Theorem \ref{IsometryBetweenQuotinetKnotGraphAndIntegersCaseTwo} that the function 
\[
g_4\times u : (Q\mathcal K_{\slashoverback}^{g_4,u},d) \to (\text{Im}(g_4\times u, |\cdot |_1)
\]
is bi-Lipschitz, and in particular, $(Q\mathcal K_{\slashoverback}^{g_4,u},d)$ is not $\delta$-hyperbolic for any $\delta \ge 0$.

\end{example}
%%%%%%%%%%%%%%%%%%%%%%%%%%%%%%%%%%%%%%%%%%%%%%%%%%%%%%%%%%%%%
%%%%%%%%%%%%%%%%%%%%%%%%%%%%%%%%%%%%%%%%%%%%%%%%%%%%%%%%%%%%%
\section{Hyperbolicity and Homogeneity in Knot Graphs} 
\label{proofs}
This section builds on results from previous sections to provide proofs of the main theorems from the introduction. Specifically, Theorem \ref{MainTheorem1} is restated in greater generality in Theorem \ref{TheoremAboutNonHyperbolicityResults}, with Corollary \ref{coro:hyper} completing its proof. Theorem \ref{TheoremMain2} is proved in Section \ref{TheoremAboutHyperbolicityResults}, and Theorem \ref{TheoremMain3} is established in Section \ref{SectionPfoofOfTheorem3}. The final Section \ref{homogeneity} is devoted to a discussion of homogeneity and links in knot graphs, and furnishes a proof of Theorem \ref{TheoremMain4}.       
%%%%%%%%%%%%%%%%%%%%%%%%%%%%%%%%%%%%%%%%%
%%%%%%%%%%%%%%%%%%%%%%%%%%%%%%%%%%%%%%%%%
\subsection{Proof of Theorem \ref{MainTheorem1}}
\label{SubsectionProofOfMainTheorem1}
%%%
%%%
\begin{theorem} \label{TheoremAboutNonHyperbolicityResults}
For any $\delta \ge 0$ there exists a geodesic triangle that is not $\delta$-thin in 
\begin{itemize}
\item[(i)] The knot graphs $\mathcal K_{H(n)}$, for all $n\ge 2$. 
\item[(ii)] The concordance graph $\mathcal{CK}_{\slashoverback}$. 
\item[(iii)] The quotient knot graph $Q\mathcal K^{\{g_4, u\}}_{\slashoverback}$. 
\end{itemize}
Accordingly, these graphs are not $\delta$-hyperbolic for any $\delta\ge 0$, and therefore not Gromov hyperbolic. 
\end{theorem}

\begin{proof}
\label{Sec:nonhyperbolicity}
Proposition \ref{PropositionOnDeltaHyperbolicityOfHnGordianGraphs}  from Section \ref{Hn triangles} established that the knot graphs $\mathcal K_{H(n)}$ for all $n\geq 2$ contain geodesic triangles that are not $\delta$-thin for each $\delta\ge 0$. It follows that $\mathcal K_{H(n)}$ is not $\delta$-hyperbolic for any $\delta\ge 0$ and any $n\geq 2$, proving Part (i). 

Similarly, Proposition \ref{PropositionOnDeltaHyperbolicityOfConcordanceGraphs} from Section \ref{Concordance triangles} established this same result for the concordance graph $\mathcal{CK}_{\slashoverback}$, thereby proving Part (ii). Part (iii) was established in Example \ref{isometry}. 
\end{proof}

\begin{corollary}
\label{coro:hyper}
The knot graph $\mathcal{K}_{\slashoverback}$ is not Gromov hyperbolic.
\end{corollary}

\begin{proof}
It suffices to construct a geodesic triangle in $\mathcal{K}_{\slashoverback}$ which is not $\delta$-thin for any $\delta\geq 0$. We shall reuse here the triangles and notation from Section \ref{Concordance triangles}. Thus, consider the triangle in $\mathcal{K}_{\slashoverback}$ with vertices the unknot $U$, $\#^{k} K_{0, 1}, L\#(\#^{k} K_{1, 1})$ and edges $\ell_{1}, \ell_{2}, \ell_{3}$ as in Section \ref{Concordance triangles}. We first claim that this is a geodesic triangle in $\mathcal{K}_{\slashoverback}$, just as it was in $\mathcal{CK}_{\slashoverback}$ back in Section \ref{Concordance triangles}. Note that for a pair of vertices $\#^{m} K_{0, 1}$ and $\#^{n} K_{0, 1}$ in the edge $\ell_1$ with $m, n\in \{0, \cdots, k\}$, 
\[
	d_{\slashoverback}(\#^{m} K_{0, 1}, \#^{n} K_{0, 1})\leq |m-n|,
\]
where $d_{\slashoverback}$ is the induced metric in $\mathcal{K}_{\slashoverback}$. 
On the other hand, 
\[
	d_{\slashoverback}(\#^{m} K_{0, 1}, \#^{n} K_{0, 1})\geq d([\#^{m} K_{0, 1}], [\#^{n} K_{0, 1}])\geq |m-n| 
\]
where $d$ is the  metric in the  concordance graph $\mathcal{CK}_{\slashoverback}$. Hence $d_{\slashoverback}(\#^{m} K_{0, 1}, \#^{n} K_{0, 1})=|m-n|$, and $\ell_{1}$ is a geodesic. By using a similar argument, we can prove that $\ell_{2}, \ell_{3}$ are also geodesics and the geodesic triangle in $\mathcal{CK}_{\slashoverback}$ is also a geodesic triangle in $\mathcal{K}_{\slashoverback}$. It is also not hard to see that 
$$d_{\slashoverback}(\#^{k} K_{1, 1}, \ell_{1}\cup \ell_{2})\geq k/2$$ 
by using a similar argument. Hence, $\mathcal{K}_{\slashoverback}$ is not $\delta$-hyperbolic for any $\delta\geq 0$, and therefore not Gromov hyperbolic. \end{proof}
%%%%%%%%%%%%%%%%%%%%%%%%%%%%%%%%%%%%%%%%5
%%%%%%%%%%%%%%%%%%%%%%%%%%%%%%%%%%%%%%%%%
\subsection{Proof of Theorem \ref{TheoremMain2}}

\begin{theorem} \label{TheoremAboutHyperbolicityResults}
Each of the quotient knot graphs 
\[
Q\mathcal K_{\slashoverback}^{u},  \qquad  Q\mathcal K_{H(2)}^{\gamma_4}, \qquad  Q\mathcal K_{\slashoverback}^{g_4}, \qquad  Q\mathcal K_{\slashoverback}^{\tau}, \qquad  \text{ and } \qquad Q\mathcal K_{\slashoverback}^{s/2}, 
\]
is $\delta$-hyperbolic for any $\delta \ge 0$. Specifically, the first three spaces are isometric to $\mathbb N\cup \{0\}$, while the second two are isometric to $\mathbb Z$, each equipped with the Euclidean metric.  
\end{theorem}
%%%
Theorem \ref{TheoremAboutHyperbolicityResults} generalizes Theorem \ref{TheoremMain2}, and is a direct consequence of Examples \ref{CrossingAndUnkottingNumber}--\ref{ExampleCCAndS/2} respectively, from Section \ref{quotient triangles}.   

\begin{remark}
	
	The results presented in Theorems \ref{TheoremAboutNonHyperbolicityResults} and \ref{TheoremAboutHyperbolicityResults} stand in stark contrast to one another, representing opposite extremes on the \lq\lq $\delta$-hyperbolicity scale\rq\rq. It would appear that hyperbolicity in quotient knot graphs emerges only when the set of invariants $\mathbb I$ used in its construction consists of a single knot invariant, and when that knot invariant is compatible with all the unknotting operations in $\mathbb O$. Indeed, in such a case we find the resulting quotient knot graph to be quasi-isometric to a subset of $\mathbb R$ (cf. Theorem \ref{IsometryBetweenQuotinetKnotGraphAndIntegers}). In all other cases we find that hyperbolicity is absent from knot graphs.   
\end{remark}

\begin{question}
Does there exist a knot graph that is $\delta$-hyperbolic for some, but not all $\delta>0$? 
\end{question}
%%%%%%%%%%%%%%%%%%%%%%%%%%%%%%%%%%%%%%%%%%%
%%%%%%%%%%%%%%%%%%%%%%%%%%%%%%%%%%%%%%%%%%%
\subsection{Proof of Theorem \ref{TheoremMain3}} \label{SectionPfoofOfTheorem3}
The proof of Theorem \ref{TheoremMain3} rests on the observation that there exist knots whose $s'$ and $\tau$-invariants differ, something already exploited in Section \ref{Concordance triangles}. This observation allows us to construct a vertex in the Gordian knot graph that is of unbounded distance from any arbitrary connected sum of torus knots. We do so now. 
\vskip3mm

In a proof by contradition of Theorem \ref{TheoremMain3}, suppose that there exists some universal bound $n$ such that for all knots $K$ there is a connected sum of some number torus knots $T$ with $d(K, T)\leq n$. Because both $s'$ and $\tau$ change by either $-1, 0$ or $1$ under a crossing change, the difference $r = s'-\tau$ provides the lower bound $2d(K, K') \geq | r(K) - r(K')|$ on the  Gordian distance, while on the other hand for any connected sum $T$ of torus knots, $r(T)=0$. Let $K := \#^{2n+2} K_{0, 1}$. Since $\tau(K_{0,1})=0$ and $s'(K_{0,1})=1$, we obtain that $2d(K, T)\geq |r(K)-r(T)| \geq 2n+2$, and so $d(K, T)>n$. Theorem \ref{TheoremMain3} follows.

\begin{remark}
\label{squeeze}
In fact, Theorem \ref{torus ball} could be stated more generally by replacing the set of connected sums of torus knots with the larger set consisting of connected sums of knots whose $s'$ and $\tau$-invariants agree. Indeed, Feller-Lewark-Lobb  define a class of knots called \emph{squeezed knots}, which occur as a slice of a minimal-genus cobordism between positive and negative torus knots \cite{FLL}. Squeezed knots contain torus knots, positive and quasi-positive knots, negative and quasi-negative knots, alternating and homogeneous knots and is closed under connected sums. Such knots have the property that their evaluations on different slice-torus invariants are identical, and in particular, $s'$ and $\tau$ will agree. Any subset of squeezed knots could replace the torus knots in the statement of Theorem \ref{TheoremMain3}. 
\end{remark}

%%%%%%%%%%%%%%%%%%%%%%%%%%%%%%%%%%%%%%%%%%%%%%%%%%%%%%%%%%%%%
%%%%%%%%%%%%%%%%%%%%%%%%%%%%%%%%%%%%%%%%%%%%%%%%%%%%%%%%%%%%%
\subsection{Homogeneity and Links, and the Proof of Theorem \ref{TheoremMain4}}
\label{homogeneity}

Recall that a metric space $(X,d)$ is {\em homogeneous} if for every $x,y\in X$ there exists an isometry $\psi:X\to X$ with $\psi(x) = y$, i.e. if the isometry group of $X$ acts transitively on $X$. If a metric space $(G,d)$ arises from a graph $G$ all of whose edges have length 1, let us define the {\em link of a vertex $v\in Vert(G)$}, denoted $\ell k(v)$, as the induced subgraph of $G$ generated by the set 
\[
	\{ w \in Vert(G) \, |\, d(v,w) = 1\}.
\]
Note that $v\notin \ell k(v)$ and that for $w, u \in \ell k(v)$, an edge $e=\{w,u\}$ belongs to $\ell k(v)$ if and only if $d(w,u)=1$. The {\em diameter of $\ell k(v)$} is the supremum of  $\{d(w,u)\,|\, w,u \in Vert(\ell k(v))\}$.  If $(G,d)$ is a  homogeneous metric space, clearly the links of any pair of vertices are isometric. 

\begin{question}
With regards to the above definition, we ask:
\begin{itemize}
\item[(i)] In which, if any, knot graphs is the link of the (class of the) unknot connected?
\item[(ii)] If the link of the (class of the) unknot is connected, determine if its diameter is finite. If the diameter is finite, calculate or estimate its value.   
\item[(iii)] Which, if any, knot graphs from Definition \ref{DefinitionOfKnotGraphs} are homogeneous?
\end{itemize} 
\end{question}
%%%
%%%
Some of these questions are inspired by the work \cite{HoffmanWalsh} of Hoffman-Walsh which studies the {\em Big Dehn Surgery Graph}. The vertices of this graph are closed orientable 3-manifolds, and edges are formed by 3-manifolds related by a Dehn surgery. Hoffman-Walsh prove that the link of $S^3$ is connected and of finite diameter.  
In another direction,  Nakanishi and Ohyama \cite{NakanishiOhyama} show that the $\#$-Gordian graph $\mathcal K_\#$ is not homogeneous, by utilizing the well understood relation between the Conway polynomial and pass-moves. 

We now state a more detailed version of Theorem \ref{homogeneous}. 
%%%
%%%
\begin{theorem} \label{TheoremOnHomogeneityOfConcordanceGraphs}
Let $\mathbb O = \{\mathcal O_1, \dots, \mathcal O_n\}$ be a collection of unknotting operations and let $\mathcal{CK}_\mathbb O$ be the associated concordance graph. 
\begin{itemize}
\item[(i)] The concordance graph $\mathcal{CK}_{\mathbb O}$ is always homogeneous. Specifically, an isometry of $\mathcal{CK}_\mathbb O$ sending a concordance class $[K]$ to a concordance class $[K']$ is given by 
$$\psi([L]) = [L\#(-\overline{K})\#K'],$$
where $-\overline{K}$ is the reverse mirror of $K$.
%%%
\item[(ii)] The quotient knot graphs $Q\mathcal K^\tau _{\slashoverback}$ and $Q\mathcal K^{s'}_{\slashoverback}$ are homogeneous. 
\end{itemize}  
\end{theorem}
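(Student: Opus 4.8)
The plan is to treat the two parts separately, with Part (i) carrying the real content and Part (ii) following quickly from the earlier hyperbolicity theorem.

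For Part (i), I would first verify that the proposed map $\psi([L]) = [L\#(-\overline{K})\#K']$ is well defined on concordance classes. This rests on the standard fact that connected sum descends to the concordance group: if $L$ is concordant to $L_1$, then $L\#J$ is concordant to $L_1\#J$ for any knot $J$; taking $J = (-\overline{K})\#K'$ gives well-definedness. Next I would check that $\psi([K]) = [K']$, which amounts to observing that $K\#(-\overline{K})$ is slice, since a knot connect-summed with its reverse mirror bounds a smooth disk in $B^4$, so that $[K\#(-\overline{K})\#K'] = [K']$.

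The core of the argument is to show $\psi$ is a graph automorphism, hence an isometry. I would first produce an explicit inverse, $\phi([M]) = [M\#(-\overline{K'})\#K]$, and verify $\phi\circ\psi = \mathrm{id}$ and $\psi\circ\phi = \mathrm{id}$ using again that $K\#(-\overline{K})$ and $K'\#(-\overline{K'})$ are slice; this makes $\psi$ a bijection of vertices. Then I would show $\psi$ carries edges to edges. Suppose $[L]$ and $[M]$ are adjacent in $\mathcal{CK}_\mathbb O$, so there are representatives $L'$ concordant to $L$ and $M'$ concordant to $M$ whose diagrams differ by a single $\mathcal O_i$-move. Since every unknotting operation is a local move on a diagram, I can perform that same $\mathcal O_i$-move inside the $L'$-summand of a connected-sum diagram for $L'\#(-\overline{K})\#K'$, turning it into $M'\#(-\overline{K})\#K'$; as $L'\#(-\overline{K})\#K'$ is concordant to $L\#(-\overline{K})\#K'$ and likewise for $M$, the classes $\psi([L])$ and $\psi([M])$ are adjacent. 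Applying the same reasoning to $\phi$ shows $\psi^{-1}$ also preserves adjacency, so $\psi$ is a graph automorphism and hence an isometry, proving homogeneity.

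For Part (ii), I would simply invoke Theorem \ref{TheoremAboutHyperbolicityResults}, which identifies $Q\mathcal K^\tau_{\slashoverback}$ and $Q\mathcal K^{s'}_{\slashoverback}$ (with $s' = s/2$) with $\mathbb Z$ under the Euclidean metric. Since the integer translations $x\mapsto x+n$ are isometries of $(\mathbb Z, |\cdot|)$ that act transitively, these spaces are homogeneous, and homogeneity is transported across the isometry. The step I expect to be the main obstacle is the edge-preservation claim in Part (i): one must argue carefully that an $\mathcal O_i$-move realized in diagrams of the representatives $L'$ and $M'$ can be transported verbatim into a connected-sum diagram without interfering with the fixed summand $(-\overline{K})\#K'$, and that the resulting diagrams are genuinely connected sums of the intended knots. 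This relies on the locality built into the definition of an unknotting operation together with the compatibility of connected sum with concordance; the remaining bookkeeping—well-definedness, the slice identities, and the explicit inverse—is routine.
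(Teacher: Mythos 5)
Your proposal is correct and takes essentially the same approach as the paper: the same map $\psi$, the same explicit inverse (the paper's $\psi_{K',K}$, your $\phi$), the same transfer of an $\mathcal O_i$-move on representatives $L'$, $M'$ to the connected sums with $(-\overline{K})\#K'$, applied to both $\psi$ and its inverse, and Part (ii) deduced directly from Theorem \ref{TheoremAboutHyperbolicityResults}. The only cosmetic difference is that you package the conclusion as a graph automorphism preserving adjacency in both directions, while the paper iterates the distance-one estimate to obtain $d(\psi([L]),\psi([L']))\le d([L],[L'])$ and then applies the same bound to the inverse.
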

\begin{proof}
Part (ii) of the preceding theorem and the following corollary  are direct consequences of Theorem \ref{TheoremAboutHyperbolicityResults}. 

%%%%%%%%%%%%%%%%%%%%%%%%%%%%%%%%%%%%%%%%%%%%%%%%%%%%%%%%%%%%%
%%%%%%%%%%%%%%%%%%%%%%%%%%%%%%%%%%%%%%%%%%%%%%%%%%%%%%%%%%%%%

Let $\mathbb O = \{\mathcal O_1, \dots, \mathcal O_n\}$ be a collection of distinct unknotting operations, let  $\mathcal {CK}_\mathbb O$ be the associated concordance graph, and let $d$ denote its induced metric. For a fixed pair of knots $K, K'$, let $\psi_{K,K'} :\mathcal {CK}_\mathbb O \to \mathcal {CK}_\mathbb O$ be the function 
\[
	\psi([L]) = [L\#(-\overline{K})\#K'],
\]
where $-\overline{K}$ denotes the reverse mirror of $K$. 
Note that $\psi([K]) = [K']$ and that $\psi_{K,K'}$ is a bijection with inverse $\psi_{K',K}$. 

To show that $\psi_{K,K'}$ is an isometry of $\mathcal{CK}_\mathbb O$, let $[L]$ and $[L']$ be concordance classes with $d([L],[L']) = 1$. Without loss of generality we may assume that the knots $L$ and $L'$ are related by an $\mathcal O_i$-move (or its inverse) for some $i\in \{1,\dots, n\}$. It follows that the knots $L\#(-\overline{K})\#K'$ and $L'\#(-\overline{K})\#K'$ are also related by an $\mathcal O_i$-move (or its inverse) showing that 
\[
	d(\psi_{K,K'}([L]), \psi_{K,K'}([L']) = d([L\#(-\overline{K})\#K'],[L'\#(-\overline{K})\#K']) = 1.
\] 
Iterating this argument one finds that for any pair of concordance classes $[L]$ and $[L']$ (with arbitrary $d([L],[L'])$) the following inequality holds:
\[ d(\psi_{K,K'}([L]),\psi_{K,K'}([L']))\le d([L],[L']). \]
Repeating the argument for $(\psi_{K,K'})^{-1} = \psi_{K',K}$ one obtains the opposite inequality, showing that $\psi_{K,K'}$ is an isometry. 

\end{proof}

%%%
\begin{corollary}
The link of the class of the unknot in the quotient knot graphs $Q\mathcal K_{\slashoverback}^{g_4}$, $Q\mathcal K_{\slashoverback}^{u}$ and $Q\mathcal K_{H(2)}^{\gamma_4}$ is a singleton set. In the quotient knot graphs $Q\mathcal K_{\slashoverback}^{\tau}$ $Q\mathcal K_{\slashoverback}^{s/2}$, the link of the class of the unknot consists of exactly two points and is disconnected.  
\end{corollary}

\section*{Acknowledgements}
We thank Peter Feller for pointing out Remark \ref{squeeze}, and Cornelia Van Cott for useful comments on an earlier version of this paper. The second author is also grateful to the Max Planck Institute for Mathematics in Bonn for its hospitality and financial support.

% Authors must disclose all relationships or interests that 
% could have direct or potential influence or impart bias on 
% the work: 
%
%\section*{Conflict of interest}
%%
%The authors declare that they have no conflict of interest.

\bibliographystyle{alpha}
\bibliography{bibliography}

\end{document}